 \newtheorem{thm}{Theorem}[section]
 \newtheorem{lem}[thm]{Lemma}
 \newtheorem{prop}[thm]{Proposition}
 \theoremstyle{definition}
 \theoremstyle{remark}
 \newtheorem*{remark}{Remark}
 \numberwithin{equation}{section}
 \newcommand{\RM}{\mathbb{R}}
 \newcommand{\bv}{\mathbf{v}}
 \newcommand{\bx}{\mathbf{x}}
 \newcommand{\by}{\mathbf{y}} 
 \newcommand{\bn}{\mathbf{n}}
 \newcommand{\NM}{\mathbb{N}}
 \newcommand{\ZM}{\mathbb{Z}}
 \newcommand{\CM}{\mathbb{C}}
 \newcommand{\dr}{\operatorname{dr}}
 \newcommand{\curl}{\operatorname{curl}}
 \newcommand{\divv}{\operatorname{div}}
 \newcommand{\real}{\operatorname{Re}}
 \newcommand{\imag}{\operatorname{Im}}
 \newcommand{\vraisup}{\operatorname{vraisup}}
 \newcommand{\ds}{\operatorname{ds}}
 \newcommand{\dlambda}{\operatorname{d\lambda}} 
 \newcommand{\sign}{\operatorname{sign}}
 \newcommand{\dx}{\operatorname{d\mathbf{x}}}
 \newcommand{\dy}{\operatorname{d\mathbf{y}}}
\begin{document}

\title[No-slip boundary condition]
{No-slip boundary condition for vorticity equation in 2D exterior domains}

\arxurl{arXiv:2209.11188}

\author[Aleksei Gorshkov]{Aleksei Gorshkov}

\address{Lomonosov Moscow State University, GSP-1, Leninskie Gory, Moscow, 119991, Russian Federation}  
\addressemail{alexey.gorshkov.msu@gmail.com}

\begin{abstract}
In this article we derive the no-slip boundary condition for a non-stationary vorticity equation. This condition generates the affine invariant manifold and no-slip integral relations on vorticity can be transferred to a Robin-type boundary condition. 
\end{abstract}

\maketitle

\newpage

\section*{Preliminary}Fluid dynamics equations written in terms of vorticity favourably differ from ones covering velocity evolution. So, for both 2D and 3D Navier-Stokes  systems the vorticity dynamics involves a fewer number of equations rather than in the velocity-pressure formulation. For barotropic fluid the curl removes the most problematic term - pressure. If the system doesn't involve boundary condition (e.g. Cauchy problem), then the $\curl$ operator significantly simplifies corresponding {{\it initial-value problem}. But in the case of {\it initial-boundary-value problem} it isn't true since Dirichlet no-slip boundary turns out to be a sophisticated integral condition. And so, vorticity boundary conditions mostly are deduced in vorticity-stream formulation\cite{Wu}\cite{And}\cite{WJ}\cite{Suh}. In \cite{Zakh} was studied the parametric boundary condition covering vorticity and stream. Some new results on Newman and Dirichlet boundary conditions for vorticity on solid walls were given in \cite{OH}.

We will investigate the boundary vorticity which corresponds to the no-slip condition. At first sight there is no analogous to no-slip condition only in terms of vorticity without additional functions involved (e.g. stream function, velocity). Biot-Savar law restores the solenoidal velocity field $\bv(\bx)$ induced by vorticity $w(\bx)$. It expresses vorticity via velocity field by some integral relation. If the flow interacts with solid by no-slip condition, then it turns to zero integral relation in Biot-Savar law which doesn't admit explicit integration. If we look at dolphins, then their skin can read vorticity distribution in order to prevent turbulence. In a playful way we can say that dolphins know something about boundary vorticity. 

In this paper will be established vorticity boundary condition for both linear and nonlinear Helmholtz equations without any stream function involved in. For the linear Stokes system in the exterior of the disc such boundary condition along with explicit formula to Stokes problem was obtained by the author in \cite{AG}. In this article we show that for nonlinear vorticity equation in the exterior of the disc of radius $r_0$ the boundary condition can be defined in terms of vorticity Fourier coefficients $w_k(t,r)$ as Robin-type boundary problem: 
\begin{align}\label{_3:bound:nonlin}
\frac{\partial w_k(t,r)}{\partial r}\Big|_{r=r_0} +  |k| w_k(t,r_0) = u_k(t).
\end{align}
And the same boundary-value problem will be actual in more general domains with help of Riemann mapping.

For exterior flows it is typical to study solutions with infinite energy. These solutions with finite Dirichlet integral were studied in \cite{Abe}\cite{MS}. For Cauchy problem global existence and bounds on velocity as time $t\to \infty$ were established in \cite{GW1}\cite{Zel}\cite{Th}. In this paper we will research the vorticity equation for infinite energy solutions. The main difficulty is the fact that the space $L_2$ is not enough to describe vorticity dynamics. The solutions with finite vorticity energy but nonzero total circulation may have infinite kinetic energy. But in case of zero circularity it's not getting  considerably better. In order to restore velocity via Biot-Savar law and correctly define the nonlinear term $(\bv, \nabla w)$ we need to impose additional requirements on the phase space. 

The Laplace operator with Robin boundary condition (\ref{_3:bound:nonlin}) possesses the non-trivial kernel and as result it causes the presence of the stationary solution for the corresponding evolution equation. But from the no-slip condition follows an orthogonality relation between the solution and the kernel (see \cite{AG} for more details). It is entirely consistent with the Stokes Paradox.

The paper is arranged as follows. In Section 1 we study Biot-Savar law and the integral relation for no-slip condition. Then in Section 2 we will derive the precise integral boundary condition for Stokes system and its local approximation for Helmholtz vorticity equation. In Section 3 we will prove the local existence of no-slip condition for vorticity. In Appendix the solvability of the vorticity equation with boundary (\ref{_3:bound:nonlin}) will be established.

{\bf Notations:}
In the paper we will exploit $\RM^2$ as real as well as a complex plane depending on the context.
For points from $\RM^2$ we will use different real and complex notations including polar coordinates $r,\varphi$ such as $\bx=(x_1,x_2)$, $z=x_1+ix_2=r e^{i\varphi}$. For functions defined on $E\subset \RM_+$ along with classic spaces $L_p(E)$, $L_p(E)$ we will use $L_p(E; r)$, $L_p(E; \lambda)$ of square-integrable functions with  infinitesimal elements $r\dr$, $\lambda \dlambda$, supplied with norms
\begin{align*}
\|f\|^p_{L_p(E; r)}=\int\limits_E |f(r)|^p r\dr, \\
\|f\|^p_{L_p(E; \lambda)}=\int\limits_E |f(\lambda)|^p \lambda \dlambda.
\end{align*}

Velocity $\bv$ will be used in both Cartesian $(v_1, v_2)$ and polar coordinates $(v_r, v_\phi)$. Fourier coefficients for function $f$ will be referred $f_k$ with prefix $k$. For Laplace operator its Fourier expansion will involve $\Delta_k$ defined as
\begin{align} \label{fourierlaplace}
\Delta_k w(t,r) = \frac 1r \frac {\partial}{\partial r}\left(r \frac {\partial}{\partial r}w(t,r)\right) - \frac{k^2}{r^2} w(t,r).
\end{align}
 
\section{Biot-Savar law in exterior domains and no-slip integral condition}

Now we study when the solenoidal velocity field $\bv(\bx)$ can be uniquely restored from its vorticity $w(\bx)$. Consider the following elliptic problem in exterior domain $\Omega$:
\begin{eqnarray}
&&\rm{div}~ \bv(\bx) = 0, \label{freediv} \\
&&\rm{curl}~ \bv(\bx) = w(\bx), \label{curleq} \\
&&\bv(\bx)=0,~\bx \in \partial \Omega, \label{bound}\\
&&\bv(\bx)\to\bv_\infty,~|\bx|\to \infty, \label{boundinf}.
\end{eqnarray}

Exterior domains are not simply connected and the problem above could haven't unique solution. For example, equations (\ref{freediv}), (\ref{curleq}) supplied with slip condition on the boundary 
\begin{equation}\label{slip}
\left(\bv(\bx), \bn \right) = 0,~\bx \in \partial \Omega,
\end{equation}
and fixed flow at infinity (\ref{boundinf}) have a unique solution only if we fix circularity at infinity ($\bn$ is an outer normal to boundary). No-slip condition (\ref{bound}) is stronger than (\ref{slip}), and so some additional restrictions on $w(\bx)$ are required. These restrictions can be realized via moment relations for vorticity. In \cite{AG} the solvability of the system above was researched in detail for slip and no-slip conditions in the exterior of the disc. Here we extend these results on more general domains and obtain integral no-slip condition.

So, we need to fix circularity at infinity. From a physical point of view it is natural to suppose zero-circularity:
\begin{equation} \label{zerocirculation}
\lim_{R\to\infty}\oint_{|\bx|=R} \bv \cdot d\mathbf{l} = 0.
\end{equation} 

Then the solution of the above problem if it exists is given by Biot-Savar formula
\begin{equation} \label{BSformula}
\bv(\bx) =\frac 1{2\pi} \int_\Omega \frac{(\bx-\by)^\perp}{|\bx-\by|^2} w(\by) \operatorname{d\by} + \bv_\infty,
\end{equation}
which we rewrite in polar coordinates. Boundary condition (\ref{bound}) obliges the vorticity $w(\by)$ to satisfy some additional equities. 

The relationship between Cartesian and polar coordinate systems for $\bv_\infty=(v_{\infty,x},v_{\infty,y})$ is given by formulas:
\begin{align*}
&v_{\infty,r}=v_{\infty,x}\cos \varphi + v_{\infty,y}\sin \varphi, \\ 
&v_{\infty,\phi}=v_{\infty,y}\cos \varphi - v_{\infty,x}\sin \varphi. 
\end{align*}

Then its Fourier coefficients are determined as
\begin{align}
&v_{\infty,r,k}=\frac{\delta_{|k|,1}}2 (v_{\infty,x} - i k v_{\infty,y}) \label{fouriercoeffr}, \\ 
&v_{\infty,\phi,k}=\frac{\delta_{|k|,1}}2 (v_{\infty,y} + i k v_{\infty,x}) \label{fouriercoeffphi},
\end{align} 
and
\begin{align} \label{vrvphi}
v_{\infty,\phi,k} =  \sign(k) iv_{\infty,r,k}.	
\end{align}

All Fourier coefficients of the external flow equal to zero except $k=\pm 1$. For horizontal flow $\bv_\infty=(v_\infty,0)$
\begin{align*}
&v_{\infty,r,k}=\frac{\delta_{|k|,1}}2 v_\infty, \\ 
&v_{\infty,\phi,k}=i k\frac{\delta_{|k|,1}}2  v_\infty.
\end{align*}

\subsection{Biot-Savar law in exterior of the disc}
In this subsection the domain under investigation will be the exterior of the disc $B_{r_0}=\{\bx \in \RM^2,~|\bx| > r_0 \},~r_0>0$. We will derive Biot-Savar law and no-slip boundary condition for Stokes and Navier-Stokes systems in integral form. 

In polar coordinates equations (\ref{freediv}),(\ref{curleq}) can be written in Fourier coefficients $v_{r,k}$, $v_{\varphi,k}$:
\begin{eqnarray*}
&&{\frac {1}{r}}{\frac {\partial }{\partial r}}\left(rv_{r,k}\right)+{\frac {ik}{r}} v_{\varphi,k} = 0,\\
&&{\frac {1}{r}}{\frac {\partial }{\partial r}}\left(rv_{\varphi,k}\right)-{\frac {ik}{r}} v_{r,k} = w_k.
\end{eqnarray*}

The basis for solutions of homogeneous system when $w_k \equiv 0$ consists of two vectors:
\begin{align*}
\begin{pmatrix}
v^1_{r,k} \\
v^1_{\varphi,k} 
\end{pmatrix} 
=
\begin{pmatrix}
ir^{-k-1} \\
r^{-k-1} 
\end{pmatrix}
, \\
\begin{pmatrix}
v^2_{r,k} \\
v^2_{\varphi,k} 
\end{pmatrix} 
=
\begin{pmatrix}
ir^{k-1} \\
-r^{k-1} 
\end{pmatrix}.
\end{align*}

The solution of this system with boundary relations (\ref{bound}), (\ref{boundinf}) and zero-circularity condition (\ref{zerocirculation}) was derived in \cite{AG} as Biot-Savar law in exterior of the disc in the following form for $k \in \ZM$:
\begin{eqnarray} \label{BiotSavar1}
&&v_{r,k} = \sign(k)  \frac{ir^{-|k|-1}}2 \int_{r_0}^r s^{|k|+1}w_k(s)\ds \\ 
&&~~~~~~~~~~~~~~~~~~+ \sign(k) \frac{ir^{|k|-1}}2 \int_r^\infty s^{-|k|+1}w_k(s)\ds  + v_{\infty,r,k} \nonumber \\ \label{BiotSavar2}
&&v_{\varphi,k} = \frac{r^{-|k|-1}}2 \int_{r_0}^r s^{|k|+1}w_k(s)\ds \nonumber \\ 
&&~~~~~~~~~~~~~~~~~~- \frac{r^{|k|-1}}2 \int_r^\infty s^{-|k|+1}w_k(s)\ds + v_{\infty,\phi,k}.
\end{eqnarray} 

No-slip condition (\ref{bound}) and (\ref{vrvphi}) lead to moment relations for vorticity ($k\in \ZM$): 
\begin{equation} \label{noslipcondintegral}
\int_{r_0}^\infty s^{-|k|+1}w_k(s)\ds = 2ik v_{\infty,r,k} = 2 v_{\infty,\phi,k}. 
\end{equation}
From (\ref{fouriercoeffr}), (\ref{fouriercoeffphi}) these moments don't equal to zero only if $|k|=1$. 


The above formulas (\ref{BiotSavar1}), (\ref{BiotSavar2}) represent Fourier coefficients of Biot-Savar formula (\ref{BSformula}).


Since for $p >1$  $\nabla \bv$ is obtained from $\omega$ via a singular integral kernel of Calderon-Zygmund type\cite{CZ}\cite{St}, then  
\begin{align}\label{bsest2}
\|\nabla \bv(\cdot)\|_{L_p} \leq C \| w \|_{L_p}. 
\end{align} 

The case $p=2$ causes most difficulties in estimates of Biot-Savar law. The following lemma gives an estimate for $w\in H^1$.
\begin{lem}\label{bsest}
Let $w(\cdot) \in L_2(B_{r_0})$, the Fourier coefficients at $k=-1, 0, 1$ belong to $L_1(r_0,\infty)$, $\bv(\cdot)$ - be the solution of (\ref{freediv}) - (\ref{boundinf}), (\ref{zerocirculation}). Then the following estimate holds
with some $C>0$:
\begin{align*}
&\vraisup_{r\in [r_0,\infty)} \| \bv(r,\cdot) - \bv_\infty \|^2_{H^{1/2}(S_{r})}  \leq  \\ &~~~~~~~~~~~C
\left (\|w(\cdot)\|^2_{L_2(B_{r_0})} + \sum_{k=-1,0,1}\|w_k(\cdot)\|^2_{L_1(r_0,\infty)} \right),
\end{align*}
where $S_r = \{\bx \in \RM^2,~|\bx|=r\}$.
\end{lem}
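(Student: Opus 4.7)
\medskip

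\textbf{Proof plan.} The plan is to exploit that on each circle $S_r$ the $H^{1/2}$ norm is diagonalised by the Fourier decomposition, so that
\begin{equation*}
\|\bv(r,\cdot)-\bv_\infty\|_{H^{1/2}(S_r)}^2 \;\asymp\; \sum_{k\in\ZM} (1+|k|)\Bigl(|v_{r,k}(r)-v_{\infty,r,k}|^2 + |v_{\varphi,k}(r)-v_{\infty,\phi,k}|^2\Bigr),
\end{equation*}
up to a universal $r$-dependent weight that is bounded on $[r_0,\infty)$. Then it suffices to estimate each Fourier coefficient of $\bv-\bv_\infty$ coming from the explicit Biot--Savart representation (\ref{BiotSavar1})--(\ref{BiotSavar2}) and to show that, after multiplication by $(1+|k|)$ and summation in $k$, one recovers the right-hand side of the claimed bound uniformly in $r\ge r_0$. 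I would split the sum into the high modes $|k|\ge 2$ (controlled by $\|w\|_{L_2}$) and the finitely many low modes $k=-1,0,1$ (controlled by $\|w_k\|_{L_1}$).

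For $|k|\ge 2$ the external flow contributes nothing ($v_{\infty,r,k}=v_{\infty,\phi,k}=0$), and I would apply the weighted Cauchy--Schwarz inequality separately to each of the two integrals in (\ref{BiotSavar1}) and (\ref{BiotSavar2}):
\begin{equation*}
\Bigl|\int_{r_0}^{r} s^{|k|+1} w_k(s)\,\ds\Bigr|^{2} \le \Bigl(\int_{r_0}^{r} s^{2|k|+1}\,\ds\Bigr)\Bigl(\int_{r_0}^{r} |w_k(s)|^2 s\,\ds\Bigr),
\end{equation*}
and similarly for $\int_r^\infty s^{-|k|+1} w_k\,\ds$ (this uses $|k|\ge 2$ for convergence). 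The prefactors $r^{-2|k|-2}$ and $r^{2|k|-2}$ cancel the integrals, giving
\begin{equation*}
|v_{r,k}(r)|^2+|v_{\varphi,k}(r)|^2 \le \frac{C}{|k|^2-1}\int_{r_0}^{\infty}|w_k(s)|^2 s\,\ds.
\end{equation*}
Multiplying by $(1+|k|)$ produces a factor $\frac{|k|+1}{|k|^2-1}=\frac{1}{|k|-1}$, which is uniformly bounded for $|k|\ge 2$; summing in $k$ and invoking Parseval on $B_{r_0}^{c}$ yields the $\|w\|_{L_2(B_{r_0})}^2$ contribution.

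For the three low modes the weighted Cauchy--Schwarz argument fails because the power integrals diverge, and here I would invoke the no-slip moment identity (\ref{noslipcondintegral}). At $k=0$ one has $v_{r,0}\equiv 0$ and $v_{\infty,\phi,0}=0$, while (\ref{noslipcondintegral}) gives $\int_{r_0}^{\infty} s w_0(s)\,\ds = 0$; rewriting
\begin{equation*}
v_{\varphi,0}(r) = \frac{1}{r}\int_{r_0}^{r} s w_0(s)\,\ds
\end{equation*}
and using $s\le r$ inside the integral yields $|v_{\varphi,0}(r)|\le \|w_0\|_{L_1(r_0,\infty)}$. For $|k|=1$, an analogous rearrangement of (\ref{BiotSavar1})--(\ref{BiotSavar2}) using the moment $\int_{r_0}^{\infty} w_{\pm 1}\,\ds = 2v_{\infty,\phi,\pm 1}$ allows me to absorb $\bv_\infty$ into the tail integral, after which the estimate $\frac{1}{r^2}\int_{r_0}^{r} s^2 |w_1|\,\ds \le \|w_1\|_{L_1}$ (again via $s\le r$) and a direct bound on $\int_r^\infty w_1\,\ds$ give the $L_1$-type control on the three low modes.

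The main obstacle is exactly the treatment of the modes $k=-1,0,1$, because the natural $L_2$-weighted Cauchy--Schwarz estimate diverges there; the no-slip moment identity (\ref{noslipcondintegral}) is the crucial ingredient that lets one replace the badly-behaved integral by its complementary piece, restoring the required uniformity in $r\in[r_0,\infty)$. Once the high- and low-mode estimates are combined, Parseval and the finiteness of the exceptional index set produce the claimed inequality.
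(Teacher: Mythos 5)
Your argument is essentially the paper's own proof: diagonalise the $H^{1/2}(S_r)$ norm in Fourier modes, bound the Biot--Savart integrals for $|k|\ge 2$ by weighted Cauchy--Schwarz exactly as in (\ref{bsest_proof}), handle $k=-1,0,1$ through the $L_1$ hypothesis (the paper bounds those tail integrals directly by the $L_1$ norms rather than first flipping them via the moment identity (\ref{noslipcondintegral}), but this is a cosmetic difference), and then sum with the weight $1+|k|$. The only slip is your intermediate bound $C/(|k|^2-1)$: Cauchy--Schwarz applied to the tail integral yields only $1/(2|k|-2)$, i.e.\ one power of $|k|-1$ rather than two, but since $(1+|k|)/(|k|-1)$ is still uniformly bounded for $|k|\ge 2$ the final summation and the conclusion are unaffected.
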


\begin{proof}
We will estimate one of the terms in (\ref{BiotSavar1}) when others can be processed in a similar way. 

For $|k| > 1:$ 
\begin{align}\label{bsest_proof}
\left | r^{|k|-1} \int_r^\infty s^{-|k|+1}w_k(s)\ds \right |^2 \leq \frac {\|w_k\|^2_{L_2(r_0,\infty; r)}}{2|k|-2}.
\end{align}

For $|k| = 1:$
\begin{align*}
\left | \int_r^\infty w_{\pm 1}(s)\ds \right |^2 \leq (1+1/r_0) \|w_{\pm 1}\|^2_{L_1(r_0,\infty, r)}.
\end{align*}

For $|k| =0:$ 
\begin{align*} 
\left | r^{-1} \int_r^\infty s w_0(s)\ds \right |^2 \leq  r_0 \|w_0\|^2_{L_1(r_0,\infty; r)}.
\end{align*}


Fractional differentiation of order $\frac 12$ corresponds to multiplier $\sqrt k$ for Fourier coefficients. Summarizing by $k$ we obtain the required estimate. 
%
%
\end{proof}

We rewrite moment relationship in terms of vorticity $w(\bx)$ for $k\geq 0$:
\begin{align*}
\int_{r_0}^\infty s^{-|k|+1}w_k(s)\ds = \frac 1{2 \pi} \int_{r_0}^\infty \int_0^{2\pi} s^{-k+1}w(s,\varphi)e^{-ik\varphi}\ds d\varphi \\ =\frac 1{2 \pi} \int_{B_{r_0}} \frac {w(\bx)}{z^k} \dx = 2ik v_{\infty,r,k},
\end{align*} 
where $z=x_1+ix_2=s e^{i\varphi}$.	

For $k<0$ we have absolutely the same moments equity:
\begin{align*} 
\int_{r_0}^\infty s^{-|k|+1}w_k(s)\ds = \frac 1{2 \pi} \int_{r_0}^\infty \int_0^{2\pi} s^{k+1}w(s,\varphi)e^{-ik\varphi}\ds d\varphi \nonumber \\ =\frac 1{2 \pi} \int_{B_{r_0}} \overline{z^k} w(\bx) \dx = 2ik v_{\infty,r,k},~k<0.
\end{align*}
The last formula for $k<0$ is just complex conjugation of the analogous formula for $k \geq 0$ due to equities
$$
v_{\infty,r,k} = \overline{v_{\infty,r,-k}},  v_{\infty,\phi,k} = \overline{v_{\infty,\phi,-k}}. 
$$

Then  for no-slip condition we have affine subspace $M$ which must be invariant under vorticity flow:
\begin{align}\label{noslipcondintegral2} 
M=\left \{ w(\bx)\in L_1(B_{r_0})~\Big |~ 
\int_{B_{r_0}} \frac {w(\bx)}{(x_1+ix_2)^k} \dx =  
4 \pi ik v_{\infty,r,k},~k \in \ZM_+  \right \}.
\end{align} 
Note that from (\ref{fouriercoeffr}), (\ref{fouriercoeffphi}) the integral relations in the definition of $M$  are non-zero ones only if $k=1$. 

For the Fourier coefficients the invariance of $M$ means that for vorticity flow which is described by the coefficients $w_k(t,\cdot)$ holds
$$
\int_{r_0}^\infty s^{-|k|+1}w_k(t,s)\ds = const,~k \in \ZM.
$$

In \cite{AG} was proved
\begin{thm}[Biot-Savart Law in polar coordinates]\label{Biotpolarnoslip}
If $w(\bx) \in M$ then there exists the unique solution of (\ref{freediv}) - (\ref{boundinf}), (\ref{zerocirculation}) given by (\ref{BiotSavar1}), (\ref{BiotSavar2}) 
with Fourier coefficients $v_{r,k}$, $v_{\varphi,k} \in L_\infty(r_0,\infty)$.
\end{thm}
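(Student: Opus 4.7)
The plan is to work in polar coordinates, Fourier mode by Fourier mode, and exploit the affine constraints that cut out $M$. The first-order ODE system for $(v_{r,k}, v_{\varphi,k})$ with source $w_k$ has as homogeneous solutions the pair displayed just above the statement, with the usual degenerate adjustment at $k=0$ (where $r^{-1}$ is a double root and a second independent solution is needed). Variation of parameters produces a particular solution, and the two integration constants are pinned down by decay at infinity and by matching $\bv_\infty$. Differentiating (\ref{BiotSavar1})--(\ref{BiotSavar2}) directly then verifies that these formulas solve the inhomogeneous ODE system in every Fourier mode.

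Next I would check the large-$r$ behaviour. Each formula splits into two pieces: the factor $r^{-|k|-1}$ times a bounded integral plainly decays, while the factor $r^{|k|-1}$ multiplying $\int_r^\infty s^{-|k|+1}w_k(s)\ds$ only becomes problematic when $|k|\ge 1$; in that case I would rewrite the inner integral, using the defining moment of $M$ as in (\ref{noslipcondintegral}), as a constant minus $\int_{r_0}^r s^{-|k|+1}w_k\ds$. This keeps the whole expression bounded in $r$ and, in the limit $r\to\infty$, recovers $\bv_\infty$ exactly. Zero circulation at infinity follows because only the $k=0$ mode contributes to the line integral and the formula for $v_{\varphi,0}$ decays like $1/r$.

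The no-slip boundary condition is the crux. Setting $r=r_0$ annihilates both first integrals in (\ref{BiotSavar1})--(\ref{BiotSavar2}), leaving a linear combination of $\int_{r_0}^\infty s^{-|k|+1}w_k\ds$ and the components of $\bv_\infty$; the moment relation defining $M$ together with (\ref{vrvphi}) is exactly what forces $v_{r,k}(r_0)=v_{\varphi,k}(r_0)=0$ for every $k\in\ZM$. Uniqueness then reduces to the assertion that a curl- and divergence-free field vanishing on $\partial\Omega$, decaying at infinity, and carrying zero circulation must vanish; mode by mode this is a homogeneous ODE with enough boundary data to leave only the trivial solution. The $L_\infty$ bound in $r$ on each $v_{r,k}$, $v_{\varphi,k}$ comes from the same rewriting used in the asymptotic step, which neutralises the growing factor $r^{|k|-1}$ using the moment constraint.

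The hard part is to reconcile these three conditions simultaneously in the modes $|k|=1$, where the factor $r^{|k|-1}=1$ does not decay, the moment in $M$ is non-trivial, and the \emph{affine} (rather than linear) character of $M$ encodes the interaction with the prescribed flow at infinity. This is precisely the place where the zero-circulation requirement and the matching at infinity become entangled with the no-slip condition at $r=r_0$, and getting the cancellation and the boundedness to emerge from the same moment identity is the delicate point.
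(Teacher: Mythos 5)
Your overall architecture coincides with the paper's: the formulas (\ref{BiotSavar1})--(\ref{BiotSavar2}) are the variation-of-parameters solution of the first-order system for $(v_{r,k},v_{\varphi,k})$ with constants fixed by the behaviour at infinity (the paper imports this from \cite{AG}), and the no-slip condition is verified by setting $r=r_0$, which annihilates the first integrals and reduces (\ref{bound}) to the moment identities (\ref{noslipcondintegral}) defining $M$ --- exactly your ``crux'' step, and the same computation the paper writes out explicitly in the proof of Proposition \ref{Biotpolarnoslip2}. Your uniqueness argument (mode-by-mode elimination of the homogeneous solutions) is also fine.

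Two of your justifications, however, do not work as stated. First, zero circulation does not follow from ``$v_{\varphi,0}$ decays like $1/r$'': the circulation over $|\bx|=R$ equals $2\pi R\,v_{\varphi,0}(R)$, and from (\ref{BiotSavar2}) at $k=0$ one gets $R\,v_{\varphi,0}(R)\to \tfrac 12\int_{r_0}^\infty s\,w_0(s)\ds$, a finite but generally nonzero limit; you need the $k=0$ moment relation in $M$ (equivalently $\int_{B_{r_0}}w\dx=0$) to obtain (\ref{zerocirculation}). Second, your mechanism for the $L_\infty$ bound --- rewriting $\int_r^\infty s^{-|k|+1}w_k\ds$ as a constant minus $\int_{r_0}^r s^{-|k|+1}w_k\ds$ --- goes the wrong way: for $|k|\ge 2$ the rewritten term $r^{|k|-1}\int_{r_0}^r s^{-|k|+1}w_k\ds$ can grow like $r^{|k|-1}$ (take $w_k$ concentrated near $r_0$), whereas the original tail integral is bounded directly and without any moment condition, since $s^{-|k|+1}=s^{-|k|}\cdot s\le r^{-|k|}s$ on $[r,\infty)$ gives $r^{|k|-1}\bigl|\int_r^\infty s^{-|k|+1}w_k\ds\bigr|\le r^{-1}\|w_k\|_{L_1(r_0,\infty;r)}$; this direct estimate (in $L_2$ form it is (\ref{bsest_proof})) is what the paper relies on. Membership in $M$ is needed only for the no-slip identity at $r=r_0$ and for the $k=0$ circulation, not for boundedness. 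A minor further point: at $k=0$ the two displayed homogeneous solutions $(i/r,1/r)$ and $(i/r,-1/r)$ are still linearly independent, so no degenerate second solution is required for the first-order system.
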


\subsection{Biot-Savar Law in the simply connected domains}
Here we derive Biot-Savar law in more general domains. We will establish that the Robin-type boundary (\ref{robin_bound}) stays actual for these domains.
Let $\Omega=\RM^2 \setminus B$, where $B$ is bounded simple-connected domain with smooth boundary and $\Phi$ be a Riemann mapping from $\Omega$ into exterior of the disc $B_{r_0}$ such that
\begin{align*}
\Phi(z)=z+O\left (\frac 1z \right ), \\
\Phi'(z)=1+O\left (\frac 1{z^2} \right ).
\end{align*}

Then $\bv=\bv(\Phi^{-1}(z))=\bv(\real \Phi^{-1}(x_1+ix_2), \imag \Phi^{-1}(x_1+ix_2))$ defines vector field in $B_{r_0}$. We will not use the tensor form of divergence and consider the vector field $(v_r(\bx),v_\phi(\bx))$ as a set of two scalar functions. Then with help of Cauchy–Riemann relationship the equations (\ref{freediv}), (\ref{curleq}) turn to relation
\begin{equation}
\curl \bv + i \divv \bv = \frac {\Phi'(z)}{|\Phi'(z)|^2}w.
\end{equation}

Then the system (\ref{freediv})-(\ref{boundinf}) after Riemann mapping in $B_{r_0}$ takes the form
\begin{eqnarray}
&&\rm{div}~ \bv(\bx) = \imag \overline {\Phi'^-1(z)} w(\bx)  \label{freediv3}\\
&&\rm{curl}~ \bv(\bx) = \real \overline {\Phi'^{-1}(z)} w(\bx)  \label{curleq3}\\
&&\bv(\bx)=0,~|\bx|=r_0  \label{bound3}\\
&&\bv(\bx)\to\bv_\infty,~|\bx|\to \infty.  \label{boundinf3}
\end{eqnarray}

Let
\begin{align*}
r_k(r)=[\imag \overline {\Phi'^{-1}(z)} w(\bx)]_k,\\
q_k(r)=[\real \overline {\Phi'^{-1}(z)} w(\bx)]_k,
\end{align*}
where subscript $k$ denotes $k$-th Fourier harmonic and $z=re^{i\varphi}=x_1+ix_2$.

Rewrite (\ref{freediv3}),(\ref{curleq3}) in polar coordinates in terms of Fourier coefficients $v_{r,k}$, $v_{\varphi,k}$:
\begin{eqnarray*}
&&{\frac {1}{r}}{\frac {\partial }{\partial r}}\left(rv_{r,k}\right)+{\frac {ik}{r}} v_{\varphi,k} = r_k(r),\\
&&{\frac {1}{r}}{\frac {\partial }{\partial r}}\left(rv_{\varphi,k}\right)-{\frac {ik}{r}} v_{r,k} = q_k(r).
\end{eqnarray*}

Under the assumption of zero-circularity (\ref{zerocirculation}) from Stoke's theorem we have 
\begin{align*}
\lim_{R\to\infty}\oint_{|\bx|=R} \bv(\bx) \cdot d\mathbf{l}= \frac 1{2 \pi} \int_{\Omega} w(\bx) \dx \\ = \frac 1{2 \pi} \int_{B_{r_0}} \frac {w(\bx)}{|\Phi'|^2} \dx = 
\int_{r_0}^\infty  \left [\frac {w(\bx)}{|\Phi'|^2} \right ]_{k=0} s \ds = 0.
\end{align*}

It guarantees the uniqueness of the above system. Existence will be provided by the moment relations below. The solution of the above system for $k \in \ZM$ is derived by the similar formulas (\ref{BiotSavar1}), (\ref{BiotSavar2}):
\begin{eqnarray} \label{BiotSavar1_2}
&&v_{r,k} = \sign(k)  \frac{ir^{-|k|-1}}2 \int_{r_0}^r s^{|k|+1}(q_k-i\sign(k) r_k)\ds \\ 
&&~~~~~~~~~~~~~~~~~~+ \sign(k) \frac{ir^{|k|-1}}2 \int_r^\infty s^{-|k|+1}(q_k+i\sign(k)r_k)\ds  + v_{\infty,r,k} \nonumber \\ \label{BiotSavar2_2}
&&v_{\varphi,k} = \frac{r^{-|k|-1}}2 \int_{r_0}^r s^{|k|+1}(q_k-i\sign(k)r_k)\ds \nonumber \\ 
&&~~~~~~~~~~~~~~~~~~- \frac{r^{|k|-1}}2 \int_r^\infty s^{-|k|+1}(q_k+i\sign(k)r_k)\ds + v_{\infty,\phi,k}.
\end{eqnarray}

Formulas (\ref{BiotSavar1_2}), (\ref{BiotSavar2_2}) combined with (\ref{bound}) lead to the relations on vorticity ($k\in \ZM$): 
\begin{equation} \label{noslipcondintegralrieman}
\int_{r_0}^\infty s^{-|k|+1}\left ( q_k(s)+i\sign(k)r_k(s) \right )\ds = 2ik v_{\infty,r,k} = 2 v_{\infty,\phi,k}. 
\end{equation}

The above formulas (\ref{BiotSavar1_2}), (\ref{BiotSavar2_2}) are the Fourier coefficients of Biot-Savar formula 
\begin{equation}\label{_1:BS}
\bv(\bx)=\frac 1{2\pi} \int_{B_{r_0}} \left ( \frac{(\bx-\by)^\perp}{|\bx-\by|^2} \real \overline {\Phi'^{-1}(z)} +
\frac{\bx-\by}{|\bx-\by|^2} \imag \overline {\Phi'^{-1}(z)} \right ) w(\by)d\by +\bv_\infty.
\end{equation}

Following by the same way as in Theorem \ref{invthm} we define affine subspace $M$ via  vorticity moments which must be invariant under vorticity flow:
\begin{align}\label{noslipcondintegral2_omega} 
M=\Big \{ w(\bx)\in L_1(B_{r_0})~\Big |~ 
\int_{B_{r_0}}\overline{\Phi'^{-1}(x_1+ix_2)} \frac {w(\bx)}{(x_1+ix_2)^k} \dx = \nonumber \\ 
4 \pi ik v_{\infty,r,k},~k \in \ZM_+  \Big \}.
\end{align} 
In view of (\ref{fouriercoeffr}), (\ref{fouriercoeffphi}) all moments in the definition of $M$ must be equal to zero except $k=1$. 

\begin{prop}[Biot-Savart Law in exterior domain]\label{Biotpolarnoslip2}
If $w(\Phi^{-1}(z)) \in M$ then there exists the unique solution of (\ref{freediv}) - (\ref{boundinf}), (\ref{zerocirculation}) given by (\ref{BiotSavar1_2}), (\ref{BiotSavar2_2}) with Fourier coefficients $v_{r,k}$, $v_{\varphi,k} \in L_\infty(r_0,\infty)$.
\end{prop}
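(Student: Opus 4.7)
My plan is to imitate the proof of Theorem~\ref{Biotpolarnoslip} in the disc case, with the scalar source $w_k$ replaced by the complex source $q_k + i\sign(k)\, r_k$ produced by the Riemann pull-back. First I pass to the disc: setting $\tilde{\bv}(\bx) = \bv(\Phi^{-1}(z))$, the original system becomes (\ref{freediv3})--(\ref{boundinf3}) on $B_{r_0}$, since $\Phi$ is a biholomorphism sending $\partial\Omega$ to $\{|\bx|=r_0\}$ and the asymptotics $\Phi(z)=z+O(1/z)$ preserve the constant flow at infinity. The zero-circulation hypothesis (\ref{zerocirculation}) pulls back to $\int_{B_{r_0}} w(\Phi^{-1}(z))/|\Phi'|^2 \dx = 0$, precisely the calculation already shown in the excerpt via Stokes' theorem.

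Next I expand in Fourier modes. The ODE system for $v_{r,k}, v_{\varphi,k}$ in terms of $r_k, q_k$ has the same homogeneous basis $r^{\pm|k|-1}$ as in the disc case, so variation of constants produces the candidate solution (\ref{BiotSavar1_2}), (\ref{BiotSavar2_2}) after choosing integration constants that kill the modes growing at infinity and leave only the constant flow $\bv_\infty$. Evaluating (\ref{BiotSavar1_2}), (\ref{BiotSavar2_2}) at $r = r_0$ yields the moment identity (\ref{noslipcondintegralrieman}), which is exactly the no-slip requirement at the boundary of the disc.

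The decisive step is translating (\ref{noslipcondintegralrieman}) into the intrinsic definition of $M$. Writing $F(\bx) = \overline{\Phi'^{-1}(z)}\, w(\Phi^{-1}(z))$, so that $q_k = [\real F]_k$ and $r_k = [\imag F]_k$, a short computation with conjugate symmetry gives
\begin{equation*}
q_k + i\sign(k)\, r_k = \begin{cases} F_k, & k > 0, \\ \overline{F_{-k}}, & k < 0.\end{cases}
\end{equation*}
Rewriting the one-dimensional integral in (\ref{noslipcondintegralrieman}) as a planar integral via $\dx = s\,ds\,d\varphi$ and $z^k = s^k e^{ik\varphi}$ for $k \geq 1$, the relation becomes exactly the $k$th moment identity in (\ref{noslipcondintegral2_omega}); the $k < 0$ case is its complex conjugate, and the vanishing for $|k|\neq 1$ follows from (\ref{fouriercoeffr}), (\ref{fouriercoeffphi}). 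Hence $w(\Phi^{-1}(\cdot))\in M$ is equivalent to the formulas (\ref{BiotSavar1_2}), (\ref{BiotSavar2_2}) satisfying (\ref{bound3}).

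Finally the $L_\infty$ bounds on $v_{r,k}, v_{\varphi,k}$ come from the pointwise estimates used in the proof of Lemma~\ref{bsest} applied to the densities $q_k \pm i\sign(k)\, r_k$, after absorbing the bounded factor $\overline{\Phi'^{-1}}$ guaranteed by the asymptotics of $\Phi$. Uniqueness reduces, after subtracting two solutions, to the homogeneous problem $w \equiv 0$, $\bv_\infty = 0$ with vanishing circulation, and then (\ref{BiotSavar1_2}), (\ref{BiotSavar2_2}) force $\tilde{\bv} \equiv 0$. The single non-routine piece is the Fourier-algebra manipulation around $q_k \pm i\sign(k)\, r_k$ and its repackaging into the condition defining $M$; everything else is a transcription of the disc-case argument.
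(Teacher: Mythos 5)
Your proposal is correct and follows essentially the same route as the paper: the decisive step in both is to evaluate (\ref{BiotSavar1_2}), (\ref{BiotSavar2_2}) at $r=r_0$ and rewrite the radial moment $\int_{r_0}^\infty s^{-|k|+1}(q_k+i\sign(k)r_k)\ds$ as the planar integral $\frac 1{2\pi}\int_{B_{r_0}}\overline{\Phi'^{-1}(z)}\,w(\bx)z^{-k}\dx$, which is exactly the condition defining $M$. The surrounding material you spell out (reduction to the disc, variation of constants, uniqueness from zero circulation, $L_\infty$ bounds via Lemma~\ref{bsest}) is established in the text preceding the proposition rather than inside its proof, and your explicit handling of the $k<0$ case via conjugate symmetry is, if anything, slightly more careful than the paper's.
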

\begin{proof}All we need is to prove the validity of no-slip condition. Set $r=r_0$ in (\ref{BiotSavar1_2}), (\ref{BiotSavar2_2}). Since with $z=se^{i\varphi}=x_1+ix_2$
$$
q_k(s)+ir_k(s) = [\overline{\Phi'^{-1}(z)} w(x)]_k,
$$
equities (\ref{BiotSavar1_2}), (\ref{BiotSavar2_2}) can be written in terms of $w(\bx)$. Then
\begin{align} \label{_1:BiotSavar_int}	
&\int_{r_0}^\infty s^{-|k|+1}\left ( q_k(s)+ir_k(s) \right )\ds \\&= \frac 1{2 \pi} \int_{r_0}^\infty \int_0^{2\pi} s^{-|k|+1}\overline{\Phi'^{-1}(z)}w(s,\varphi)e^{-ik\varphi}\ds d\varphi \nonumber \\ &=\frac 1{2 \pi} \int_{B_{r_0}} \overline{\Phi'^{-1}(z)} \frac {w(\bx)}{z^k} \dx = 2ik v_{\infty,r,k}. \nonumber
\end{align}
\end{proof}

\begin{lem} \label{lembsest} In the exterior simple-connected domain $\Omega$ with smooth boundary let $\Phi$ be a Riemann mapping from $\Omega$ into exterior of the disc $B_{r_0}$, $w(\cdot) \in L_1(B_{r_0})\cap L_2(B_{r_0})$, $\bv=\bv(\Phi^{-1}(z))=\bv(\real \Phi^{-1}(x_1+ix_2), \imag \Phi^{-1}(x_1+ix_2))$ defines vector field in $B_{r_0}$ of the the solution of (\ref{freediv}) - (\ref{boundinf}), (\ref{zerocirculation})  with vorticity $w(\Phi^{-1}(z))$. Then the following estimate holds with some $C>0$:
\begin{align*}
\vraisup_{r\in [r_0,\infty)} \| \bv(r,\cdot) - \bv_\infty \|^2_{H^{1/2}(S_{r_0})}  \leq C
(\|w(\cdot)\|_{L_2(B_{r_0})} + \|w(\cdot)\|_{L_1(B_{r_0})}). 
\end{align*}
\end{lem}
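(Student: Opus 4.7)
The plan is to reduce to the disc-case estimate in Lemma~\ref{bsest}, applied to the modified density
\begin{equation*}
\tilde w(\bx) := \overline{\Phi'^{-1}(z)}\, w(\bx), \qquad \bx \in B_{r_0},\ z = x_1 + i x_2.
\end{equation*}
By construction the Fourier coefficients of $\real\tilde w$ and $\imag\tilde w$ are precisely $q_k$ and $r_k$, so the combinations $q_k \pm i\sign(k)r_k$ appearing in (\ref{BiotSavar1_2}), (\ref{BiotSavar2_2}) have moduli bounded by $|\tilde w_{\pm k}|$. Since (\ref{BiotSavar1_2}), (\ref{BiotSavar2_2}) have exactly the same algebraic structure as (\ref{BiotSavar1}), (\ref{BiotSavar2}) in the disc case, the termwise bounds driving the proof of Lemma~\ref{bsest}, in particular (\ref{bsest_proof}) and its $|k|\leq 1$ analogues, carry over verbatim with $w_k$ replaced by $\tilde w_{\pm k}$. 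After reindexing, summing over $k$ yields the $H^{1/2}$ estimate for $\bv$ in terms of $\|\tilde w\|_{L_2(B_{r_0})}$ and $\|\tilde w_k\|_{L_1(r_0,\infty;\,r)}$ for $|k|\leq 1$.

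The first step is to control $\tilde w$ by $w$ in $L_p$. Because $\partial\Omega$ is smooth and $\Phi'(z) = 1 + O(1/z^2)$ at infinity, Kellogg's boundary regularity theorem for conformal maps gives that $\Phi'$ extends continuously to $\overline{B_{r_0}}$ and is nonvanishing there. Hence $|\overline{\Phi'^{-1}(z)}|\leq C_\Phi$ throughout $B_{r_0}$ for some $C_\Phi>0$, so
\begin{equation*}
\|\tilde w\|_{L_p(B_{r_0})}\leq C_\Phi\,\|w\|_{L_p(B_{r_0})},\qquad p=1,2.
\end{equation*}

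The second step reduces the $L_1(r_0,\infty;r)$ norms of the low-order Fourier coefficients $\tilde w_{-1}, \tilde w_0, \tilde w_1$ required by Lemma~\ref{bsest} to the global $L_1$ norm. From the pointwise bound $|\tilde w_k(r)|\leq \frac{1}{2\pi}\int_0^{2\pi}|\tilde w(r,\varphi)|\,d\varphi$ and Fubini one obtains
\begin{equation*}
\|\tilde w_k\|_{L_1(r_0,\infty;\,r)}\leq \frac{1}{2\pi}\|\tilde w\|_{L_1(B_{r_0})}\leq \frac{C_\Phi}{2\pi}\|w\|_{L_1(B_{r_0})}.
\end{equation*}
Feeding both bounds into Lemma~\ref{bsest} applied to $\tilde w$ delivers the estimate claimed for $\bv$, with the new constant absorbing $C_\Phi$.

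The main obstacle is the bookkeeping that identifies the integrands $q_k \pm i\sign(k)r_k$ in (\ref{BiotSavar1_2}), (\ref{BiotSavar2_2}) with Fourier coefficients of $\tilde w$ up to complex conjugation and sign-of-$k$ issues, so that the termwise moduli appearing in the proof of Lemma~\ref{bsest} can be re-used unchanged. Once that identification is in place, boundary regularity of the Riemann map is the only additional analytic ingredient, and the final bound follows by direct substitution; no genuinely new estimate beyond Lemma~\ref{bsest} is required.
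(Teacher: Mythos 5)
Your proposal is correct and follows essentially the same route as the paper: the paper's entire proof is the one-line remark that, since $|\Phi'(z)|$ is bounded (away from zero, by boundary regularity of the conformal map), the argument of Lemma~\ref{bsest} applies verbatim to the modified density $\overline{\Phi'^{-1}(z)}\,w$. You have merely filled in the details of that reduction — the identification of $q_k \pm i\sign(k)r_k$ with Fourier coefficients of $\tilde w$, and the control of the low-mode $L_1(r_0,\infty;r)$ norms by $\|w\|_{L_1(B_{r_0})}$ — which the paper leaves implicit.
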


Since $|\Phi'(z)|$ is bounded, then the proof of this lemma is the same as for Lemma \ref{bsest} in the previous subsection.

\subsection{Invariant affine manifolds for a no-slip condition}
Consider the flow of 2D velocity field $\bv(t,\bx)$ and its vorticity $w(t,\bx)=\curl \bv(t,\bx)$. The set of relations (\ref{noslipcondintegral2_omega}) is the integral form of the no-slip condition (\ref{bound}).

\begin{prop} \label{invthm}
Given initial datum $\bv_0(\bx)$ satisfying no-slip condition (\ref{bound}), infinity condition (\ref{boundinf}), zero-circularity (\ref{zerocirculation}), such that \\$w_0=\curl \bv_0(\Phi^{-1}(z))$ $\in L_1(\Omega)$, and $w(t,\cdot)=\curl \bv(\Phi^{-1}(z))$ $\in L_1(\Omega)$ be the vorticity flow. Then $w_0 \in M$ and in order to conserve no-slip condition the affine subspace $M$ must be invariant under the flow, e.g. for any time $t>0$ $w(t,\cdot) \in M$.
\end{prop}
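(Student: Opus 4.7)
The plan is to exploit the fact that $M$ in (\ref{noslipcondintegral2_omega}) was defined precisely to encode the no-slip condition applied to the Biot-Savart representation; so membership in $M$ is an equivalent algebraic reformulation of no-slip. The proof then consists of reapplying the derivation of $M$ pointwise in $t$.

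For the first claim $w_0\in M$, I would evaluate the Biot-Savart formulas (\ref{BiotSavar1_2})--(\ref{BiotSavar2_2}) at $r=r_0$ with vorticity $w_0$. The inner integrals $\int_{r_0}^{r_0}$ vanish, and the no-slip relation $\bv_0(\bx)=0$ on $|\bx|=r_0$ forces $v_{r,k}(r_0)=v_{\varphi,k}(r_0)=0$ for every $k\in\ZM$. This gives, for each $k$,
\[
\int_{r_0}^\infty s^{-|k|+1}\bigl(q_k(s)+i\sign(k)r_k(s)\bigr)\ds = 2ik v_{\infty,r,k},
\]
which is exactly (\ref{noslipcondintegralrieman}). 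The chain of equalities in (\ref{_1:BiotSavar_int}) then rewrites the left side as $\frac{1}{2\pi}\int_{B_{r_0}}\overline{\Phi'^{-1}(z)}\,w_0(\bx)\,z^{-k}\dx$ for $k\in\ZM_+$, which is exactly the moment condition defining $M$; the case $k<0$ follows by the complex-conjugation symmetry of Fourier coefficients noted earlier in the disc setting. Hence $w_0\in M$.

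For invariance I apply the identical argument at each $t>0$. By hypothesis the vorticity flow arises from a velocity field $\bv(t,\cdot)$ which still satisfies no-slip, infinity condition, and $w(t,\cdot)\in L_1(\Omega)$; reapplying Biot-Savart (\ref{BiotSavar1_2})--(\ref{BiotSavar2_2}) at $r=r_0$ and following the computation (\ref{_1:BiotSavar_int}) verbatim with $w_0$ replaced by $w(t,\cdot)$ places $w(t,\cdot)\in M$. Thus $M$ is an affine invariant subspace of the flow.

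The main nontrivial point I foresee is ensuring that the Biot-Savart representation remains applicable at each $t>0$, which requires the zero-circularity condition (\ref{zerocirculation}) to persist along the flow. This should be automatic since the total circulation corresponds to the $k=0$ Fourier mode and $v_{\infty,r,0}=0$; the $k=0$ case of the $M$-moment thus amounts to conservation of the $k=0$ harmonic of vorticity, which is preserved by the evolution. A secondary check is the uniform convergence of the moment integrals in $t$, which follows from $w(t,\cdot)\in L_1(B_{r_0})$ combined with the boundedness of $|\Phi'^{-1}|$ coming from the normalization $\Phi'(z)=1+O(1/z^2)$.
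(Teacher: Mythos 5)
Your proposal is correct and follows essentially the same route as the paper: the paper's own (one-line) proof simply cites (\ref{BiotSavar1_2}), (\ref{BiotSavar2_2}) and the computation (\ref{_1:BiotSavar_int}), which is exactly the argument you spell out — evaluating Biot--Savart at $r=r_0$, using no-slip to force the moment relations, and repeating this at each $t>0$. Your additional checks on zero-circularity and convergence of the moment integrals are sensible elaborations but not a different method.
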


\begin{proof}
Since $\bv_0(\bx)$ satisfies no-slip condition, then from (\ref{BiotSavar1_2}), (\ref{BiotSavar2_2}), (\ref{_1:BiotSavar_int}) follows $w(t,\cdot) \in M$.  
\end{proof}

If $\bv_\infty=0$ then this affine manifolds $M$ path through zero and becomes the invariant subspace. If we limit infinite set of relations in (\ref{noslipcondintegral2_omega}) only by $k=1,...,N$, we obtain affine manifold $M_N$ of finite codimension:  
\begin{align}\label{noslipcondintegral2_omega_N} 
M_N=\Big \{ w(\bx)\in L_1(B_{r_0})~\Big |~ 
\int_{B_{r_0}}\overline{\Phi'^{-1}(z)} \frac {w(\bx)}{z^k} \dx =  
4 \pi ik v_{\infty,r,k},\\ k =0, 1, \dots, N  \Big \}. \nonumber
\end{align} 

The following lemma says that since $M$ corresponds to a no-slip condition, then $M_N$ is the approximation of this boundary condition. 
\begin{lem}Let for any fixed $t\in [0,T]$ $w_N(t,\cdot)$ are uniformly bounded in $H^1(\Omega)$ by $N$. Then the vector field $\bv^N(t,\cdot)$ given by (\ref{_1:BS}) converges weakly in $H^{1/2}(\partial \Omega)$ to zero for $t\in [0,T]$ as $N\to \infty$. 
\end{lem}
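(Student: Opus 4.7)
The plan is to combine two ingredients: the explicit Fourier representation of the boundary trace coming from (\ref{BiotSavar1_2})--(\ref{BiotSavar2_2}), which turns the moment conditions defining $M_N$ into mode-by-mode vanishing of $\bv^N$ at $r=r_0$, and Lemma \ref{lembsest}, which provides a uniform $H^{1/2}$ bound on the boundary trace. Weak convergence in $H^{1/2}(\partial\Omega)$ then drops out by a routine Fourier-basis argument on the circle $S_{r_0}\simeq \partial\Omega$.

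First I would set $r=r_0$ in (\ref{BiotSavar1_2}): the integral $\int_{r_0}^{r}$ collapses and the $k$-th Fourier coefficient of the trace becomes
\[
v^N_{r,k}(r_0)=\sign(k)\,\frac{i\,r_0^{|k|-1}}{2}\int_{r_0}^{\infty}s^{-|k|+1}\bigl(q^N_k(s)+i\sign(k)\,r^N_k(s)\bigr)\ds+v_{\infty,r,k}.
\]
The manipulation carried out in (\ref{_1:BiotSavar_int}) recasts the right-hand integral as the moment $\frac{1}{2\pi}\int_{B_{r_0}}\overline{\Phi'^{-1}(z)}w_N(\bx)z^{-k}\dx$ appearing in the definition of $M_N$. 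Membership $w_N\in M_N$, together with complex conjugation for negative $k$ (available because $w_N$ is real-valued), then forces $v^N_{r,k}(r_0)=0$ for every $|k|\leq N$; the analogous statement for $v^N_{\varphi,k}(r_0)$ follows from (\ref{vrvphi}). Hence $\bv^N|_{\partial B_{r_0}}$, and therefore $\bv^N|_{\partial\Omega}$ via the smooth diffeomorphism $\Phi$, is supported only in Fourier modes with $|k|>N$.

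To conclude I would feed the uniform $H^1(\Omega)$ bound on $w_N$ into Lemma \ref{lembsest}, obtaining a uniform estimate $\|\bv^N|_{\partial\Omega}-\bv_\infty\|_{H^{1/2}}\leq C$ in $N$, and then expand in the orthogonal trigonometric basis of $H^{1/2}(S_{r_0})$, in which the norm is equivalent to $\sum_{k}(1+|k|)|v_k|^2$. For any test functional $g\in H^{-1/2}$ with Fourier coefficients $g_k$, the first step eliminates all modes $|k|\leq N$ from the pairing $\langle g,\bv^N|_{\partial\Omega}\rangle$, so Cauchy--Schwarz together with the tail decay $\sum_{|k|>N}(1+|k|)^{-1}|g_k|^2\to 0$ drives the pairing to zero as $N\to\infty$.

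The main obstacle I anticipate lies in the use of Lemma \ref{lembsest}: it asks for $L_1(B_{r_0})$ control in addition to $L_2$, while the embedding $H^1(\Omega)\inject L_1$ fails on the unbounded exterior. The three exceptional modes $k=-1,0,1$ identified in Lemma \ref{bsest} are however precisely those pinned by the moment relations defining $M_N$ (for $N\geq 1$), so the missing $L_1(r_0,\infty;r)$ estimates on these modes should be recoverable from the $H^1$ bound together with the prescribed moment values. Once this technicality is dispatched, the remainder of the proof is the standard weak-compactness argument in Fourier series sketched above.
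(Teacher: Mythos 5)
Your overall strategy coincides with the paper's: set $r=r_0$ so the inner integral collapses, use membership in $M_N$ (via the computation in (\ref{_1:BiotSavar_int}) and the conjugation symmetry for $k<0$) to kill every Fourier mode of the trace with $|k|\le N$, obtain a uniform $H^{1/2}$ bound on what remains, and conclude weak convergence by pairing against a test functional and using Cauchy--Schwarz on the tail. The mode-killing step and the final weak-compactness argument are exactly what the paper does (the paper even reduces to $\Omega=B_{r_0}$ without loss of generality, as you implicitly do by working on $S_{r_0}$).

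The one place you deviate is the source of the uniform bound, and there your proposed repair would fail. You invoke Lemma \ref{lembsest}, whose hypotheses require $w\in L_1(B_{r_0})$ --- equivalently, as in Lemma \ref{bsest}, $L_1$ control of the modes $k=-1,0,1$ --- and you suggest recovering this from the $H^1$ bound together with the prescribed moment values. That does not work: the moment relation for $k=1$ fixes the value of the single linear functional $\int_{r_0}^\infty w_1(s)\,\ds$, which neither bounds $\|w_1\|_{L_1(r_0,\infty;r)}$ nor is even well defined without an a priori integrability assumption, and $H^1$ of an unbounded exterior domain does not embed into $L_1$. The correct resolution, and the one the paper uses, is that Lemma \ref{lembsest} is not needed at all: after step one, the boundary trace is supported in modes $|k|>N\ge 1$, and for those modes the elementary estimate (\ref{bsest_proof}) bounds each coefficient by $\|w_k\|_{L_2(r_0,\infty;r)}^2/(2|k|-2)$, so the $H^{1/2}(S_{r_0})$ norm of $\bv^N$ is controlled by $C\sum_{|k|>N}\|w^N_k\|^2_{L_2(r_0,\infty;r)}\le C\|w_N\|^2_{L_2(B_{r_0})}$, using only the hypothesis of the lemma. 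The exceptional modes $k=-1,0,1$, which are the only reason $L_1$ enters Lemmas \ref{bsest} and \ref{lembsest}, simply never appear. With that substitution your argument closes and matches the paper's proof.
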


\begin{proof}
Without loss of generality assume that $\Omega=B_{r_0}$.
\begin{align*} 
&v_{r,k}(t,r_0) = \sign(k) \frac{ir^{|k|-1}}2 \int_{r_0}^\infty s^{-|k|+1}w_k(t,s)\ds  + v_{\infty,r,k}  \\ 
&v_{\varphi,k}(t,r_0) = \frac{r^{|k|-1}}2 \int_{r_0}^\infty s^{-|k|+1}w_k(t,s)\ds + v_{\infty,\phi,k}.
\end{align*} 

From $w(t,\cdot) \in M_N$ $v_{r,k}(t,r_0) = v_{\varphi,k}(t,r_0)=0$ for $k =-N, \dots, N$. For $k>N$
\begin{align*}
\left | \int_{r_0}^\infty s^{-|k|+1}w_k(t,s)\ds  + v_{\infty,r,k} \right |=
\left | \int_{r_0}^\infty s^{-|k|+1}w_k(t,s)\ds \right |.
\end{align*}

Then from (\ref{bsest_proof})
\begin{align*}
\| \bv^N(t,\cdot)\|_{H^{1/2} (\partial \Omega)}   \leq C \sum_{k=N+1}^\infty \|w^N_k(t,\cdot)\|^2_{L_2(r_0,\infty, rdr)}. 
\end{align*}

For $w_N(t,\cdot) \in M_N$ the Fourier coefficients of $\bv_r$, $\bv_\phi$ with index $k =-N, \dots, N$ equal to zero  which implies 
\begin{align}\label{weakconv}
\bv(t,\bx') \rightharpoonup 0,~\bx' \in \partial \Omega,~(weakly)
\end{align}
in $H^{1/2}(\partial \Omega)$. 
\end{proof}

\begin{remark} If $w_N(t,\cdot)$ are uniformly bounded in $H^2$ one can prove that $v(t,\bx')$ converges weakly in $H^{3/2}(\partial \Omega)$ to zero. And from the Rellich–Kondrachov Theorem due to the compact embedding follows that for any $t\in [0,T]$ 
\begin{align}\label{strongconv}
\bv(t,\bx') \to 0,~\bx' \in \partial \Omega~(strongly)
\end{align}
in $L_2(\partial \Omega)$.

\end{remark}

\section{No-slip integral condition for vorticity in exterior domains}
Consider the initial-boundary-value problem for the Navier-Stokes system defined in exterior domain $\Omega$ modelling flow around solid with given constant horizontal flow at infinity $\bv_\infty = (\bv_{1,\infty},\bv_{2,\infty}) \in \RM^2$:
\begin{eqnarray}
&&\partial_t \bv - \Delta \bv +(\bv,\nabla)\bv = \nabla p \label{maineqns}\\
&&{\rm div}~\bv(t,\bx)=0 \label{freedivns}\\
&&\bv(0,\bx)=\bv_0(\bx) \label{initns}\\
&&\bv(t,\bx)=0,~|\bx|=r_0 \label{boundns}\\
&&\bv(t,\bx) \to \bv_\infty,~|\bx|\to \infty. \label{boundinfns}
\end{eqnarray}
Here $\bv(t,\bx)=(v_1(t,\bx),v_2(t,\bx))$ is the velocity field and $p(t,\bx)$ is the pressure.

Applying the curl operator  
$w(t,\bx)=$ ${\rm curl}~\bv(t,\bx)$ $=\partial_{\bx_1}v_2 -  \partial_{\bx_2}v_1$ we get boundary problem for vorticity equation
\begin{eqnarray} 
\frac{\partial w(t,\bx)}{\partial t}	 - \Delta w + (\bv,\nabla)w  = 0,  \label{maineqw} \\
w(0,\bx)=w_0(\bx) \label{initw}\\
\curl^{-1} w(t,\bx) \Big|_{|\bx|=r_0} = 0, \label{boundw}\\
w(t,\bx) \to 0,~|\bx|\to \infty \label{boundinfw}
\end{eqnarray} 
with initial datum $w_0(\bx)={\rm curl}~\bv_0(\bx)$. 

Vector field $\bv(t,\bx)$ can be derived from $w(t,\bx)$ using Green function $G(\bx,\by)$ for Laplace operator $\Delta$:
$$
\bv(t,\bx) = \int_\Omega \nabla_x^\perp G(\bx,\by) w(\by) \dy +  \bv_\infty,
$$
where 
\begin{equation}\label{_2:GreenProp}
\Delta_x G(\bx,\by)=0,~\bx\neq \by.
\end{equation}

Indeed
$$
{\rm div}~\bv(t,\bx) = \int_\Omega (\nabla_x,\nabla_x^\perp) G(\bx,\by) w(\by) \dy = 0,
$$
and
$$
{\rm curl}~\bv(t,\bx) = \int_\Omega \Delta_x G(\bx,\by) w(\by) \dy  = w.
$$

Then no-slip condition gives the following integral expression
$$
\bv(t,\bx') = \int_\Omega \nabla_x^\perp G(\bx',\by) w(t,\by) \dy = 0,~\bx'\in \partial \Omega,~\forall t>0.
$$

Velocity field for Stokes system satisfies
\begin{align} \label{stokeseqvelocity}
\frac{\partial v(t,\bx)}{\partial t} - \Delta v(t,\bx)  = \nabla p
\end{align}
when vorticity evolution for Stokes system is described by the heat equation  
\begin{align} \label{stokeseq}
\frac{\partial w(t,\bx)}{\partial t} - \Delta w(t,\bx)  = 0.
\end{align}

Multiplying it by $\nabla_x^\perp G(\bx',\by)$ and integrating over exterior domain using Green formula with help of (\ref{_2:GreenProp}) we will have integral boundary condition 
$$
\int_{\partial \Omega} \left (
\nabla_x^\perp G(\bx',\by) \frac{\partial w(t,\by)}{\partial n} -w(t,\by) \frac{\partial }{\partial n}\nabla_x^\perp G(\bx',\by) \right ) \dy=0,~\forall \bx' \in \partial \Omega.
$$
It is still an integral condition but only with the surface integral over $\partial \Omega$ involved. 

For cylindrical domains this surface integral turns into boundary condition on Fourier coefficients. In this section for 2D Stokes and Navier-Stokes we derive boundary condition in terms of Fourier harmonics in exterior simply-connected domains. 

\subsection{Linear vorticity equation in the exterior of the disc} 

Consider Stokes flow for vorticity (\ref{stokeseq}) and supply it with Robin-type boundary condition:
\begin{equation}\label{robin_bound}
r_0\frac{\partial w_k(t,r)}{\partial r}\Big|_{r=r_0} + |k| w_k(t,r_0) = 0,~k \in \ZM
\end{equation}
and
\begin{equation}
w(t,\bx) \to 0,~|\bx|\to \infty. \label{boundinfvorticity}
\end{equation}

Then $M$ is invariant under the flow $w(t,\cdot)$. Indeed, fix $k>0$ and divide equation (\ref{stokeseq}) by $z^k$ and integrate over the exterior of the disc $B_{r_0}$. From moment relations (\ref{noslipcondintegral2}) follows
$$
\frac d{dt}\int_{B_{r_0}} \frac {w(t,\bx)}{z^k}  \dx = 0
$$ 
and thus
$$
\int_{B_{r_0}}  \frac {\Delta w}{z^{k}}\dx=0.
$$

In other hand
\begin{align}\label{integrationbypart_dissip}\nonumber
\int_{B_{r_0}} \frac {\Delta w}{z^{k}}\dx = \int_{r_0}^\infty \int_0^{2\pi} \frac {\Delta w}{s^ke^{ik\phi}}sdsd\varphi = 2\pi \int_{r_0}^\infty s^{-k+1}\Delta_k w_k(t, s) ds \\ \nonumber=-2\pi \int_{r_0}^\infty s^{-|k|} \left (  \frac {\partial}{\partial s}\left(s \frac {\partial}{\partial s}w_k(t,s)\right) - \frac{k^2}{s}  w_k(t,s) \right ) \ds \\ \nonumber=
- r_0^{-|k|+1}\frac{\partial w_k(t,r)}{\partial r}\Big|_{r=r_0} + \int_{r_0}^\infty s^{-|k|} \left (  |k|  \frac {\partial}{\partial s}w_k(t,s) - \frac{k^2}{s} w_k(t,s) \right ) \ds \\  = -2\pi r_0^{-k}\left(r_0 \frac{\partial w_k(t,r)}{\partial r}\Big|_{r=r_0} + k w_k(t,r_0) \right )=0,
\end{align}
where $\Delta_k w(t,r)$ is defined in (\ref{fourierlaplace}).

In a similar way using complex conjugation of moment relations (\ref{noslipcondintegral2}) we obtain boundary condition for $k<0$:
\begin{align*}
\int_{B_{r_0}} {\Delta w}\overline{z^{k}}\dx  = \int_{r_0}^\infty \int_0^{2\pi} \frac {\Delta w}{s^{-k}e^{ik\phi}}sdsd\varphi = 2\pi \int_{r_0}^\infty s^{k+1}\Delta_k w_k(s) ds \\= -2\pi r_0^{k}\left(r_0 \frac{\partial w_k(t,r)}{\partial r}\Big|_{r=r_0} - k w_k(t,r_0) \right )=0.
\end{align*}

\subsection{Helmholtz equation in the exterior of the disc}

Now we are ready to derive the boundary condition to the vorticity equation. In fact it will be an integral condition. But its first approximation will be the same boundary condition (\ref{robin_bound}) as for Stokes flow.

Consider Helmholtz equation for vorticity
\begin{align} \label{helmholtzeq}
\frac{\partial w(t,\bx)}{\partial t}	 - \Delta w(t,\bx) + (\bv,\nabla w) = 0,
\end{align}
where $\bv$ is redtored from $w$ via Biot-Savar law (\ref{BSformula}).

\begin{thm}Given initial datum $\bv_0(\bx)$, $\curl \bv_0\in L_1(B_{r_0})$ satisfying no-slip condition (\ref{bound}), infinity condition (\ref{boundinf}), zero-circularity (\ref{zerocirculation}) and $\bv(t,\bx)$ be the solution of (\ref{maineqns})-(\ref{boundinfns}) in $B_{r_0}$. Then $w(t,\bx)=\curl \bv(t,\bx)$ satisfies 
\begin{align}
&r_0 \frac{\partial w_k(t,r)}{\partial r}\Big|_{r=r_0} +  |k| w_k(t,r_0)  = \nonumber \\
&~~~~\left \{ {  \begin{matrix}
\frac {|k| r_0^{|k|}}{2\pi}
\int_{B_{r_0}} (\bv^\CM_\infty - \bv^\CM){z^{-|k|-1}} w(t,\bx)\dx,~k\geq 0, \\
\frac {|k| r_0^{|k|}}{2\pi}
\int_{B_{r_0}} \overline{(\bv^\CM_\infty - \bv^\CM)z^{-|k|-1}} w(t,\bx)\dx,~k<0.
\end{matrix} } \right . \label{robin_bound_ns}
\end{align}
\end{thm}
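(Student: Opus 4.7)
The plan is to mimic the integration-by-parts argument (\ref{integrationbypart_dissip}) used to derive the Robin condition for the Stokes problem, but now starting from the full Helmholtz equation (\ref{helmholtzeq}); the extra convective term $(\bv,\nabla w)$ will supply the inhomogeneity in (\ref{robin_bound_ns}). For $k\ge 0$, I would multiply (\ref{helmholtzeq}) by $z^{-k}$ and integrate over $B_{r_0}$. By Proposition~\ref{invthm} the moment $\int_{B_{r_0}} w(t,\bx) z^{-k}\dx$ is conserved along the flow, so exchanging $\partial_t$ with the integral kills the time-derivative term and leaves
\[
\int_{B_{r_0}} \frac{\Delta w}{z^k}\dx = \int_{B_{r_0}} \frac{(\bv,\nabla w)}{z^k}\dx.
\]
The left-hand side is precisely the Stokes computation and reduces to $-2\pi r_0^{-k}\bigl(r_0\partial_r w_k(r_0)+k w_k(r_0)\bigr)$.

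For the right-hand side I would rewrite $(\bv,\nabla w) = \divv(\bv w)$ using $\divv\bv = 0$ and integrate by parts. The boundary term on $|\bx|=r_0$ vanishes by no-slip (\ref{boundns}), and the outer-boundary term vanishes provided $w$ decays fast enough at infinity (this is the technical issue flagged below). Because $z\mapsto z^{-k}$ is holomorphic, $\nabla z^{-k} = -k z^{-k-1}(1,i)$, so the remaining bulk integral collapses to $k\int_{B_{r_0}} \bv^\CM w/z^{k+1}\dx$ with $\bv^\CM := v_1 + iv_2$. Equating the two sides yields
\[
r_0\partial_r w_k(r_0) + k w_k(r_0) \;=\; -\frac{k r_0^k}{2\pi}\int_{B_{r_0}} \frac{\bv^\CM w}{z^{k+1}}\dx.
\]

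To put this in the form of (\ref{robin_bound_ns}) I split $\bv^\CM = \bv^\CM_\infty + (\bv^\CM - \bv^\CM_\infty)$. The constant piece produces a term proportional to $\int_{B_{r_0}} w/z^{k+1}\dx$, which by (\ref{noslipcondintegral2}) applied at index $k+1$ equals $4\pi i(k+1)v_{\infty,r,k+1}$; because $v_{\infty,r,\cdot}$ is supported on $|\cdot|=1$, this contribution vanishes for $k\ge 1$, and for $k=0$ it is wiped out by the prefactor $k$. What survives is exactly (\ref{robin_bound_ns}) for $k\ge 0$. The case $k<0$ is handled in parallel by multiplying (\ref{helmholtzeq}) by $\bar z^{|k|}$ instead, or equivalently by complex-conjugating the $k\ge 0$ identity using $w_{-k}=\overline{w_k}$ and $v_{\infty,r,-k}=\overline{v_{\infty,r,k}}$, exactly as the two parities were reconciled in the Stokes subsection.

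The main obstacle will be the careful justification of (i) commuting $\partial_t$ under the moment integral and (ii) the vanishing of the outer-boundary flux in the integration by parts on the convective term. For infinite-energy exterior solutions these are not automatic; however, the weight $z^{-k}$ supplies an extra $R^{-k}$ factor in the flux integrand, and combining this with the Biot--Savart bounds of Lemma~\ref{bsest} and the regularity granted by the Appendix solvability theory one should be able to approximate by vorticities of compact support in which everything is legitimate and then pass to the limit using the $L_1$-control built into the hypotheses.
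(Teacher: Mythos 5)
Your proposal is correct and follows essentially the same route as the paper's own proof: conservation of the moments $\int_{B_{r_0}} w\,z^{-k}\,\dx$ kills the time derivative, the Laplacian term reduces to the Robin expression via the Stokes computation (\ref{integrationbypart_dissip}), the convective term is integrated by parts onto $\nabla z^{-k}$ using $\divv\bv=0$ and no-slip, and the $\bv^\CM_\infty$ piece is removed by the moment relation at index $k+1$, with $k<0$ handled by conjugation. Your explicit discussion of why the constant piece vanishes (support of $v_{\infty,r,\cdot}$ at $|k|=1$ plus the prefactor $k$ at $k=0$) and of the outer-boundary flux is more careful than the paper, which passes over these points silently, but it is the same argument.
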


\begin{proof}
Fix $k>0$ and divide this equation by $z^k$ and integrate over the exterior of the disc $B_{r_0}$. From moment relations (\ref{noslipcondintegral2}) follows
$$
\frac d{dt}\int_{B_{r_0}} \frac {w(t,\bx)}{z^k}  \dx = 0
$$

Denote $$\bv^\CM = v_1+iv_2,~\bv^\CM_\infty = v_{1,\infty}+iv_{2,\infty}.$$
 
Then
\begin{align*}
\int_{B_{r_0}} \frac {(\bv,\nabla w)}{z^k} \dx = -\int_{B_{r_0}} (\bv,\nabla z^{-k})w(t,\bx) \dx \\
=k \int_{B_{r_0}} \frac{v_1+iv_2}{z^{k+1}} w(t,\bx) \dx=k \int_{B_{r_0}} \frac{\bv^\CM}{z^{k+1}} w(t,\bx)\dx
\end{align*}

From (\ref{noslipcondintegral2})
$$
k \int_{B_{r_0}} \frac{\bv^\CM_\infty}{z^{k+1}} w(t,\bx)\dx = 0
$$
and
$$
\int_{B_{r_0}} \frac {(\bv,\nabla w)}{z^k} \dx = k \int_{B_{r_0}} \frac{\bv^\CM-\bv^\CM_\infty}{z^{k+1}} w(t,\bx)\dx.
$$

Using (\ref{integrationbypart_dissip}) we get
$$
\int_{B_{r_0}}  \frac {\Delta w}{z^{k}}\dx=-2\pi r_0^{-k}\left(r_0 \frac{\partial w_k(t,r)}{\partial r}\Big|_{r=r_0} + k w_k(t,r_0) \right ).
$$

Then for $k\geq 0$
\begin{align*}
\left(r_0 \frac{\partial w_k(t,r)}{\partial r}\Big|_{r=r_0} + k w_k(t,r_0) \right ) = \frac {k r_0^k}{2\pi}
\int_{B_{r_0}} \frac{\bv^\CM_\infty - \bv^\CM}{z^{k+1}} w(t,\bx)\dx.
\end{align*}

Since $w_{-k}(t,r) = \overline{w_k(t,r)}$ then (\ref{robin_bound_ns}) holds for $k<0$. Theorem is proved.

\end{proof}

%
%

%
%

\begin{remark}
If $\|\bv - \bv_\infty\|$ is small in some integral norm for a well-streamlined body , then the right side in (\ref{robin_bound_ns}) transfers to boundary condition (\ref{robin_bound}). So, (\ref{robin_bound}) becomes a rather accurate approximation for Navier-Stokes system. From this fact naturally occurs boundary control problem with unknown function $u_k(t)$:
\begin{equation}\label{robin_bound_control}
r_0\frac{\partial w_k(t,r)}{\partial r}\Big|_{r=r_0} + |k| w_k(t,r_0) = u_k(t),~k \in \ZM,
\end{equation}
where feedback controls $u_k(t)$ are treated as a small correction of zero boundary condition.
\end{remark}

\subsection{Linear vorticity equation in a simply connected domains}Here we find out that Robin-type boundary (\ref{robin_bound}) works well as the no-slip boundary condition in exterior domains. It keeps moment relations (\ref{noslipcondintegralrieman}) under the Stokes flow, and thus $M$ is an invariant affine subspace.

We impose additional requirement on $\Omega$ that $\Phi^{-1}$ can be represented by absolutely convergent series
\begin{equation} \label{phireq}
\Phi^{-1}(z) = z+\sum\limits_{n=1}^\infty \frac {b_n}{z^n}.
\end{equation} 

\begin{thm}\label{stokesomegathm}Let $\Omega=\RM^2 \setminus B$, where $B$ is a bounded simple-connected domain with smooth boundary and $\Phi$ be a Riemann mapping from $\Omega$ into exterior of the disc satisfying (\ref{phireq}), $\bv_0(\bx)$ - is the initial datum, satisfying no-slip condition (\ref{bound}), infinity condition (\ref{boundinf}), zero-circularity (\ref{zerocirculation}), $\curl \bv_0\in L_1(B_{r_0})$, and $\bv(t,\bx)$ be the solution of (\ref{maineqns})-(\ref{boundinfns}) in $\Omega$. Then $w(t,\bx)$=$\curl \bv(t,\Phi^{-1}(\bx))$ satisfies (\ref{robin_bound}).
\end{thm}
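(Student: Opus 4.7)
The plan is to mimic the disc-case computation around (\ref{integrationbypart_dissip}) and transfer it to $B_{r_0}$ via the Riemann map. Abbreviate $\psi(z):=(\Phi^{-1})'(z)$; by (\ref{phireq}) this is holomorphic on $\{|z|\geq r_0\}$ with $\psi(\infty)=1$ and absolutely convergent Laurent expansion $\psi(z)=1-\sum_{n\geq 1}nb_n z^{-n-1}$.

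First I would set up the two key inputs. Proposition \ref{invthm}, applied to the no-slip initial datum $\bv_0$, preserves the manifold $M$ along the flow, so for every $k\in\ZM_+$,
\begin{align*}
\int_{B_{r_0}}\overline{\psi(z)}\,\frac{w(t,\bx)}{z^k}\,\dx = 4\pi i k\,v_{\infty,r,k}\quad\text{is constant in }t.
\end{align*}
On the other hand, conformality of $\Phi$ combined with the (linear) vorticity equation $\partial_t w_\Omega=\Delta w_\Omega$ in $\Omega$ translates into $|\psi(z)|^2\partial_t w=\Delta w$ in $B_{r_0}$ for the pullback $w(t,\bx):=w_\Omega(t,\Phi^{-1}(\bx))$. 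Differentiating the moment identity in $t$ and substituting from the transported equation gives, for each $k\geq 0$,
\begin{align*}
\int_{B_{r_0}}\frac{\Delta w(t,\bx)}{\psi(z)\,z^k}\,\dx = 0.
\end{align*}

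The crucial step is to upgrade this family of identities to the single-harmonic version $\int_{B_{r_0}}\Delta w/z^\ell\,\dx = 0$ for every $\ell\geq 0$. Multiplying the desired equality $z^{-\ell}=\sum_k a_{\ell,k}/(\psi(z)z^k)$ by $\psi(z)$ shows that the inversion coefficients are simply the Laurent coefficients of $\psi(z)z^{-\ell}$, which yields the explicit expansion
$$z^{-\ell}=\frac{1}{\psi(z)z^\ell}-\sum_{n\geq 1}nb_n\cdot\frac{1}{\psi(z)z^{\ell+n+1}},$$
a series that converges absolutely on $|z|\geq r_0$ under (\ref{phireq}). Substituting into $\int_{B_{r_0}}\Delta w\cdot z^{-\ell}\,\dx$ and interchanging summation with integration makes every term on the right vanish by the identity just established, so $\int_{B_{r_0}}\Delta w/z^\ell\,\dx=0$. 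The integration-by-parts identity (\ref{integrationbypart_dissip}) evaluates this to $-2\pi r_0^{-\ell}(r_0\partial_r w_\ell|_{r_0}+\ell w_\ell(t,r_0))$, and (\ref{robin_bound}) follows for $k=\ell\geq 0$; the case $k<0$ is obtained by complex conjugation via $w_{-k}=\overline{w_k}$, exactly as in the disc subsection.

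The main obstacle is the central step: one must justify both the convergence of the inverse expansion down to the boundary $|z|=r_0$ (exactly what (\ref{phireq}) is designed to provide) and the interchange of infinite summation with integration against $\Delta w$. The latter requires enough decay and integrability of $w$ and its derivatives on $B_{r_0}$, which is available from the hypothesis $\curl\bv_0\in L_1(B_{r_0})$ together with the parabolic smoothing of the transported heat equation on compact time intervals; everything else is a faithful translation of the disc-case argument.
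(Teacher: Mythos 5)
Your argument is correct and follows essentially the same route as the paper: invariance of the moments $\int_{B_{r_0}} \overline{(\Phi^{-1})'}\,w\,z^{-k}\dx$ from Proposition \ref{invthm}, the transported equation $|(\Phi^{-1})'|^2\partial_t w=\Delta w$, the absolutely convergent Laurent expansion supplied by (\ref{phireq}) to pass from the weighted moments to the pure harmonics $z^{-\ell}$, and the integration by parts (\ref{integrationbypart_dissip}), with the $k<0$ case by conjugation. The only difference is organizational: the paper expands $|(\Phi^{-1})'|^2 z^{-k}=\sum_n c_n\,\overline{(\Phi^{-1})'}\,z^{-(n+k)}$ before differentiating in $t$, whereas you first deduce $\int_{B_{r_0}}\Delta w\,\bigl((\Phi^{-1})'z^{k}\bigr)^{-1}\dx=0$ and then expand $z^{-\ell}$ over that family; both interchanges of sum and integral rest on the same convergence hypothesis.
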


\begin{proof}
After Riemann mapping Stokes equations reduce to scalar equation on vorticity
$$
|(\Phi^{-1})'(z)|^2\partial_t w(t,x) - \Delta w=0.
$$

Fix $k>0$ and divide this equation by $z^k=(x_1+ix_2)^k$. Then integrate it over the exterior of the disc $B_{r_0}$. From moment relations (\ref{noslipcondintegral2_omega}) follows
$$
\frac d{dt}\int_{B_{r_0}} \frac {\overline{(\Phi^{-1})'(z)}}{z^k} w(t,\bx) \dx = 0.
$$ 

Using $|(\Phi^{-1})'(z)|^2=(\Phi^{-1})'(z)\overline{(\Phi^{-1})'(z)}$ and
$$
(\Phi^{-1})'(z) = \sum\limits_{n=0}^\infty \frac {c_n}{z^n}
$$
with some coefficients $c_n$ we have
\begin{align*}
\frac d{dt}\int_{B_{r_0}} \frac{|(\Phi^{-1})'(z)|^2 }{z^k} w(t,x) \dx = \frac d{dt}\int_{B_{r_0}}  \sum\limits_{n=0}^\infty \frac {c_n \overline{(\Phi^{-1})'(z)}}{z^{n+k}}w(t,\bx) \dx\\=\sum\limits_{n=0}^\infty c_n \frac d{dt}\int_{B_{r_0}}   \frac { \overline{(\Phi^{-1})'(z)}}{z^{n+k}}w(t,\bx) \dx=0,
\end{align*}
and thus
$$
\int_{B_{r_0}}  \frac {\Delta w}{z^{k}}\dx=0.
$$
In other hand
\begin{align*}
\int_{B_{r_0}} \frac {\Delta w}{z^{k}}\dx = \int_{r_0}^\infty \int_0^{2\pi} \frac {\Delta w}{s^ke^{ik\phi}}sdsd\varphi = 2\pi \int_{r_0}^\infty s^{-k+1}\Delta_k w_k(s) ds \\= -2\pi r_0^{-k}\left(r_0 \frac{\partial w_k(t,r)}{\partial r}\Big|_{r=r_0} + k w_k(t,r_0) \right )=0.
\end{align*}

Using $w_{-k}(t,r) = \overline{w_k(t,r)}$ we will have condition (\ref{robin_bound}) for $k<0$.

\end{proof}

\subsection{Helmholtz equation in a simply connected domains}
For the linear vorticity equation we found the boundary condition. Here we find it for nonlinear equation. Vorticity equation (\ref{maineqw})  corresponding to Navier-Stokes system after Riemann mapping reduces to 
\begin{align}\label{helmholtzeq2}
|(\Phi^{-1})'(z)|^2\partial_t w(t,\bx) - \Delta w + B(v,w) =0,
\end{align}
where
$$
B(v,w) = \real (\Phi^{-1})' (\bv,\nabla w) - \imag (\Phi^{-1})' (\bv^\perp,\nabla w).
$$
with $\bx \in B_{r_0}$.

Supply this equation with
\begin{align}
&r_0 \frac{\partial w_k(t,r)}{\partial r}\Big|_{r=r_0} +  |k| w_k(t,r_0)  = \nonumber \\
&~~~~\left \{ {  \begin{matrix}
\frac {|k| r_0^{|k|}}{2\pi}
\int_{B_{r_0}} (\bv^\CM_\infty - \bv^\CM){z^{-|k|-1}} \overline{(\Phi^{-1})'} w(t,\bx)\dx,~k\geq 0, \\
\frac {|k| r_0^{|k|}}{2\pi}
\int_{B_{r_0}} \overline{(\bv^\CM_\infty - \bv^\CM)z^{-|k|-1}} (\Phi^{-1})' w(t,\bx)\dx,~k<0.
\end{matrix} } \right . \label{robin_bound_ns_general}
\end{align}


\begin{thm}Let $\Omega$, $\Phi$ as in Theorem  \ref{stokesomegathm}, given initial datum $\bv_0(\bx)$, $\curl \bv_0\in L_1(\Omega)$ satisfying no-slip condition (\ref{bound}), infinity condition (\ref{boundinf}), zero-\\circularity (\ref{zerocirculation}) and $\bv(t,\bx)$ be the solution of (\ref{maineqns})-(\ref{boundinfns}) in $\Omega$. Then vorticity $w(t,\cdot)=$ $\curl \bv(t,\Phi^{-1}(z))$ satisfies (\ref{robin_bound_ns_general}).
\end{thm}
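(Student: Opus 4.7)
The plan is to follow the structure of the preceding disc-case Helmholtz theorem, applied to equation (\ref{helmholtzeq2}) on $B_{r_0}$, reusing the identities of Theorem \ref{stokesomegathm} to absorb the conformal weight $|(\Phi^{-1})'|^2$. Fix $k\ge 0$, multiply (\ref{helmholtzeq2}) by $z^{-k}$, and integrate over $B_{r_0}$; this splits the result into three contributions. The time-derivative piece vanishes by the same power-series argument used in Theorem \ref{stokesomegathm}: expanding $(\Phi^{-1})'(z)=\sum_{n\ge 0}c_n z^{-n}$ and using the invariance of $M$ (Proposition \ref{invthm}), every moment $\int_{B_{r_0}}\overline{(\Phi^{-1})'}\,w/z^{n+k}\,\dx$ is time-independent, so $\partial_t\int_{B_{r_0}}|(\Phi^{-1})'|^2 w z^{-k}\dx=0$. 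The Laplacian piece is handled verbatim by the polar-coordinate computation (\ref{integrationbypart_dissip}), producing $-2\pi r_0^{-k}\bigl(r_0\partial_r w_k|_{r=r_0}+kw_k(t,r_0)\bigr)$.

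The main obstacle is the bilinear term $\int_{B_{r_0}}B(\bv,w)z^{-k}\dx$. I plan to rewrite $B(\bv,w)$ in Wirtinger form as $B(\bv,w)=\overline{(\Phi^{-1})'}\bv^\CM\partial_z w + (\Phi^{-1})'\overline{\bv^\CM}\partial_{\bar z}w$ (a direct check from the real definition in (\ref{helmholtzeq2}), using that $w$ is real) and integrate by parts against $z^{-k}$. Because $(\Phi^{-1})'$ is holomorphic, $\partial_{\bar z}z^{-k}=0$, $\partial_z z^{-k}=-k z^{-k-1}$, and $\bv$ vanishes on $|\bx|=r_0$, no boundary terms appear, and the bulk integrations by parts collapse to
\[
\int_{B_{r_0}} B(\bv,w) z^{-k}\dx = -\int_{B_{r_0}} 2\real\!\bigl(\overline{(\Phi^{-1})'}\partial_z\bv^\CM\bigr)z^{-k}w\,\dx + k\int_{B_{r_0}}\overline{(\Phi^{-1})'}\bv^\CM z^{-k-1}w\,\dx.
\]
The crucial point is that the first integrand is identically zero: from (\ref{freediv3})--(\ref{curleq3}) one computes $\partial_z\bv^\CM=\tfrac12(\divv\bv+i\curl\bv)=\tfrac{i}{2}(\Phi^{-1})'\,w$, so $\overline{(\Phi^{-1})'}\partial_z\bv^\CM=\tfrac{i}{2}|(\Phi^{-1})'|^2 w$ is purely imaginary.

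Combining the three pieces gives $r_0\partial_r w_k|_{r=r_0}+kw_k(t,r_0) = -\tfrac{k r_0^k}{2\pi}\int_{B_{r_0}}\overline{(\Phi^{-1})'}\bv^\CM z^{-k-1}w\,\dx$. To match the form of (\ref{robin_bound_ns_general}) I would add the vanishing constant $\bv^\CM_\infty$ inside the integral, which is legitimate because for $k\ge 1$ the moment identity (\ref{noslipcondintegral2_omega}) at index $k+1$, combined with $v_{\infty,r,k+1}=0$ from (\ref{fouriercoeffr}), gives $\int_{B_{r_0}}\overline{(\Phi^{-1})'}z^{-k-1}w\,\dx=0$; the case $k=0$ degenerates harmlessly since both sides carry the factor $|k|$. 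The case $k<0$ then follows by complex conjugation using $w_{-k}=\overline{w_k}$, exactly as in the disc Helmholtz theorem.
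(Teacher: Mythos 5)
Your proposal is correct and follows essentially the same route as the paper: the same three-way split (time derivative killed by moment invariance, Laplacian handled by (\ref{integrationbypart_dissip}), nonlinear term integrated by parts), with the same key cancellation and the same insertion of $\bv^\CM_\infty$ via the vanishing higher moments of (\ref{noslipcondintegral2_omega}). Your Wirtinger identity $2\real\bigl(\overline{(\Phi^{-1})'}\partial_z\bv^\CM\bigr)=0$ is just a complex-variable repackaging of the paper's real-variable cancellation between the $\divv\bv$ and $\curl\bv$ terms coming from (\ref{freediv3})--(\ref{curleq3}).
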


\begin{proof}
Apply the same steps as in previous subsections: for fixed $k>0$ divide this equation by $z^k$ and integrate over $B_{r_0}$. In the previous subsection we obtained
$$
\frac d{dt}\int_{B_{r_0}} \frac {\overline{(\Phi^{-1})'(z)}}{z^k} w(t,\bx) \dx = 0,
$$ 
%
and
\begin{align*}
\int_{B_{r_0}} \frac {\Delta w}{z^{k}}\dx = -2\pi r_0^{-k}\left(r_0 \frac{\partial w_k(t,r)}{\partial r}\Big|_{r=r_0} + k w_k(t,r_0) \right ).
\end{align*}

From (\ref{freediv3}), (\ref{curleq3})
$$
 \int_{B_{r_0}} \frac {\real (\Phi^{-1})'}{z^{k}} w \divv \bv \dx  = -
 \int_{B_{r_0}} \frac {\imag (\Phi^{-1})'}{z^{k}} w \curl \bv   \dx.  
$$

Then using Cauchy-Riemann equations
\begin{align*}
\int_{B_{r_0}} \frac {B(v,w)}{z^{k}}\dx = \int_{B_{r_0}} \frac {\real (\Phi^{-1})'}{z^{k}}(\bv,\nabla w) -
 \frac {\imag (\Phi^{-1})'}{z^{k}} (\bv^\perp,\nabla w) \dx\\=
 \int_{B_{r_0}} \left ( -\frac {\real (\Phi^{-1})'}{z^{k}} w \divv \bv  -
 \frac {\imag (\Phi^{-1})'}{z^{k}} w \curl \bv \right )  \dx  \\
 -\int_{B_{r_0}} w \left ( \left ( v, \nabla \frac {\real (\Phi^{-1})'}{z^{k}} \right ) - \left ( v^\perp, \nabla \frac {\imag (\Phi^{-1})'}{z^{k}} \right ) \right ) \dx \\ =
 k\int_{B_{r_0}} \frac {\overline{(\Phi^{-1})'}}{z^{k+1}} (v_1+iv_2) w(t,\bx) \dx
 =k\int_{B_{r_0}} \frac {\overline{(\Phi^{-1})'}}{z^{k+1}} \bv^\CM w(t,\bx)   \dx
\end{align*}

Since from (\ref{noslipcondintegral2_omega})
$$
k\int_{B_{r_0}} \frac {\overline{(\Phi^{-1})'}}{z^{k+1}} \bv^\CM_\infty w(t,\bx)   \dx = 0, 
$$
then
$$
\int_{B_{r_0}} \frac {B(v,w)}{z^{k}}\dx = k\int_{B_{r_0}} \frac {\overline{(\Phi^{-1})'}}{z^{k+1}} (\bv^\CM-\bv^\CM_\infty ) w(t,\bx)   \dx,
$$
from which follows (\ref{robin_bound_ns_general}) for $k\geq 0$.

For $k<0$ (\ref{robin_bound_ns_general}) follows from the identity $w_{-k}(t,r) = \overline{w_k(t,r)}$. Theorem is proved.

\end{proof}

This theorem says, that for Navier-Stokes system if $\|\bv^\CM-\bv^\CM_\infty\|$ is small, then (\ref{robin_bound}) works well. Nevertheless this boundary condition requires some corrections like boundary control (\ref{robin_bound_control}) in order to stay on invariant affine subspace $M$. 

\section{No-slip boundary condition for vorticity in exterior domains}

In this section we construct boundary condition for vorticity which approximates no-slip condition. With help of Riemann mapping we had reduced the Helmholtz equation (\ref{maineqw}) defined in $\Omega$ to (\ref{helmholtzeq2}) defined in $B_{r_0}$. Formulas (\ref{robin_bound_ns_general}) present integral condition which approximately equal to boundary condition (\ref{robin_bound}) for well streamlined obstacle when $\bv \simeq \bv_\infty$. In this section using fixed point theorem we will derive boundary condition for vorticity in the form like (\ref{_3:bound:nonlin}) which ensure the solution to satisfy $w_N(t,\cdot) \in M_N$ defined in (\ref{noslipcondintegral2_omega_N}).
Then the velocity $\bv(t,\bx)$ restored from $w(t,\bx)$ via Biot-Savar law (\ref{BSformula}) will be the solution of Navier-Stokes system with approximate no-slip boundary condition satisfying  (\ref{weakconv}).

Fix $N\in \NM$. We supply vorticity equation (\ref{helmholtzeq2}) with
\begin{align}
&w(0,\bx)=w_0(\bx), \label{_3:initw}\\
&w(t,\bx) \to 0,~|\bx|\to \infty, \label{_3:boundinfw}
\end{align}
and boundary condition 
\begin{equation}\label{_3:bound1}
r_0\frac{\partial w_k(t,r)}{\partial r}\Big|_{r=r_0} + k w_k(t,r_0) = u_k(t),~k = 0,1,\dots, N,
\end{equation}
where $\vec u(t) = \{u_k(t)\}_{k=0}^N$ is the set of unknown boundary functions.
Then since $w_{-k}(t,r) = \overline{w_k}(t,r)$ for $k=-N, -N-1, \dots, -1$
\begin{equation}\label{_3:bound2}
r_0\frac{\partial w_k(t,r)}{\partial r}\Big|_{r=r_0} - k w_k(t,r_0) = \overline{u_k(t)}.
\end{equation}

For $|k|>N$ set 
\begin{equation}\label{_3:bound3}
r_0\frac{\partial w_k(t,r)}{\partial r}\Big|_{r=r_0} + k w_k(t,r_0) = 0.
\end{equation}

We construct approximate boundary condition for Helmholtz equation under the assumption of unique resolvability of (\ref{helmholtzeq2}), (\ref{_3:initw})-(\ref{_3:bound3}). For the exterior of the disc the solvability theorems are proved in Appendixes \ref{oseen_sect}, \ref{ns_sect}.

For the Helmholtz problem we should use  Sobolev space $H^1(B_{r_0})$ as the phase space for $w(t,\cdot)$.  Difficulty lies in the fact that the velocity field $\bv$ isn't well defined in $H^1(B_{r_0})$.
In order to obtain $\bv(t,\cdot) \in L_\infty(B_{r_0})$ from Lemma \ref{lembsest} we need involve $L_1$ into phase space for $w_{-1}, w_0, w_1$. So, we will use phase space
$$
W = \left \{w \in H^1(B_{r_0}),~w_{-1}, w_0, w_1 \in L_1(r_0,\infty;r) \right \}.
$$

\begin{thm} Suppose that for fixed $N>0$, $T>0$ there exists the solution $w_N(t,\bx) \in C \left ( [0,T], W) \right )$ of Helmholtz equation (\ref{helmholtzeq2}), (\ref{_3:initw})-(\ref{_3:bound3}) which is locally Lipschitz continuous due to $u_k \in C[0,T]$, $k=0,\dots, N$, $w_0(\bx) \in H^1(B_{r_0})$. Then for some $M>0$ and any initial datum with $\|w_0(\cdot)\|_{H^1}\leq M$ there exists $\vec u(t) = \{u_k(t)\}_{k=0}^N$, that $w_N(t,\cdot) \in M_N$ for all $t \in [0,T]$.
\end{thm}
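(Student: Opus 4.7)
The plan is a Banach fixed-point argument in $C([0,T])^{N+1}$ for the boundary data $\vec u$. Since the initial velocity $\bv_0$ satisfies no-slip, the initial vorticity lies in $M_N$, so the quantities
\[m_k(t):=\int_{B_{r_0}}\overline{(\Phi^{-1})'(z)}\,w(t,\bx)\,z^{-k}\dx - 4\pi i k\,v_{\infty,r,k}\]
satisfy $m_k(0)=0$ for $k=0,\dots,N$. The conclusion $w^{\vec u}(t,\cdot)\in M_N$ is equivalent to $\dot m_k(t)\equiv 0$ for these $k$, and this identity will be the defining equation for $\vec u$.

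The first step is to derive the fixed-point equation. Using (\ref{helmholtzeq2}) to eliminate $\partial_t w$ in the definition of $\dot m_k$ and simplifying $\overline{(\Phi^{-1})'}/|(\Phi^{-1})'|^2=1/(\Phi^{-1})'$, one has
\[\dot m_k(t)=\int_{B_{r_0}}\frac{1}{(\Phi^{-1})'(z)}\,z^{-k}\bigl(\Delta w - B(v,w)\bigr)\dx.\]
I would expand $1/(\Phi^{-1})'(z)=\sum_{n\geq 0} a_n z^{-n}$ with $a_0=1$ (valid on $|z|\geq r_0$ by (\ref{phireq}) and $(\Phi^{-1})'(\infty)=1$) and process each summand as in (\ref{integrationbypart_dissip}) and in the proof of (\ref{robin_bound_ns_general}). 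Under the boundary conditions (\ref{_3:bound1})--(\ref{_3:bound3}), the Laplacian summands collapse to $-2\pi\sum_{j=k}^{N}a_{j-k}r_0^{-j}u_j(t)$, while the convective summands produce a series of integrals bilinear in $\bv-\bv_\infty$ and $w$. Setting $\dot m_k=0$ for $k=0,\dots,N$ yields an upper-triangular linear system in $(u_0,\dots,u_N)$ with non-vanishing diagonal $2\pi r_0^{-k}$, solvable by back-substitution. This defines a nonlinear operator $H:\,w\mapsto\vec u$, and the map $\mathcal{T}(\vec u):=H(w^{\vec u})$ is the candidate fixed-point map.

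To close the argument I need estimates on $\mathcal{T}$ on a small ball $B_\delta\subset C([0,T])^{N+1}$. By Lemma~\ref{lembsest} the Biot--Savart reconstruction controls $\|\bv-\bv_\infty\|$ on each circle by $\|w\|_W$, so each integrand in $H_k(w)$ is bilinear in $w$, and one obtains the quadratic bound $\|\mathcal{T}(\vec u)\|_{C([0,T])}\leq C\,\|w^{\vec u}\|_{C([0,T];W)}^{2}$. Combining this with the hypothesized local Lipschitz dependence of $w^{\vec u}$ on $\vec u$ and the bilinearity gives
\[\|\mathcal{T}(\vec u_1)-\mathcal{T}(\vec u_2)\|_{C([0,T])}\leq C'(M,\delta)(M+\delta)\,\|\vec u_1-\vec u_2\|_{C([0,T])}.\]
For $M$ small and $\delta=\delta(M)$ correspondingly small, $\mathcal{T}$ strictly contracts $B_\delta$ into itself; Banach's principle supplies a unique fixed point $\vec u^*$, and by construction $\dot m_k(t)\equiv 0$ for $k=0,\dots,N$ under $\vec u^*$. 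Together with $m_k(0)=0$, this gives $w^{\vec u^*}(t,\cdot)\in M_N$ on $[0,T]$.

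The main technical obstacle is the convergence of the series for $1/(\Phi^{-1})'$ inside the integral representation of $\dot m_k$ and the accompanying termwise integration by parts: one must show the tail $\sum_{n\geq 0}a_n(n+k)\int\overline{(\Phi^{-1})'}z^{-n-k-1}(\bv^\CM-\bv^\CM_\infty)w\dx$ is summable with quadratic-in-$w$ smallness uniformly in $t$. This relies on hypothesis (\ref{phireq}), the geometric decay $|z|^{-n-k-1}\leq r_0^{-n-k-1}$ on $B_{r_0}$, and the Biot--Savart bound of Lemma~\ref{lembsest}; once these are in place, the remainder follows the standard small-data contraction template.
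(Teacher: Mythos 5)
Your proposal is correct, and the core mechanism coincides with the paper's: a Banach fixed-point argument in the space of boundary controls $\vec u\in (C[0,T])^{2N+1}$, with the quadratic-in-$w$ bound on the convective moment integrals supplied by Lemma~\ref{lembsest} and the contraction constant made small through the hypothesized Lipschitz dependence of $w^{\vec u}$ on $(\vec u, w_0)$ together with the smallness of $M$. The genuine difference is in how the fixed-point equation is set up. The paper takes $F(\vec u)$ to be literally the right-hand side of (\ref{robin_bound_ns_general}) evaluated at $w^{\vec u}$, so that a fixed point means $w^{\vec u}$ satisfies (\ref{robin_bound_ns_general}); since that relation was derived in Section~2 under the assumption $w\in M$, the implication ``fixed point $\Rightarrow\dot m_k\equiv 0$ for $k\le N$'' is left implicit, and for a general conformal factor it is slightly delicate because pairing the equation with $z^{-k}$ couples $\dot m_k$ to the higher moments through the coefficients of $(\Phi^{-1})'$. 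Your route computes $\dot m_k$ directly, expands $1/(\Phi^{-1})'=\sum_n a_n z^{-n}$, and arrives at an upper-triangular system in $(u_0,\dots,u_N)$ with nonvanishing diagonal $-2\pi r_0^{-k}$; this makes the equivalence between the fixed point and $\dot m_k\equiv 0$, $k=0,\dots,N$, explicit and logically self-contained, at the price of justifying the termwise integration by parts and the summability $\sum_n |a_n| r_0^{-n}<\infty$, which you correctly flag and which follows from (\ref{phireq}) at the same level of rigor the paper itself employs (for the disc, $a_n=\delta_{n0}$ and the two maps coincide exactly). Two small points you share with the paper but handle at least as carefully: the conclusion at $t=0$ forces $w_0\in M_N$, which you state explicitly, and the smallness of $\|w^{\vec u}\|_W$ on the ball relies on the zero data producing the zero solution.
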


\begin{proof} 
The relation (\ref{robin_bound_ns_general}) can be rewritten as
\begin{align}\label{boundary_equation}
&\vec u(t) = F(\vec u) 
\end{align}
with the mapping $F(\vec u) : \left (C[0,T]\right )^{2N+1} \to \left (C[0,T]\right )^{2N+1}$:
\begin{align*}
&F[\vec u(\cdot)] = \nonumber \\
&~~~~\left \{ {  \begin{matrix}
\frac {|k| r_0^{|k|}}{2\pi}
\int_{B_{r_0}} (\bv^\CM_\infty - \bv^\CM){z^{-|k|-1}} \overline{(\Phi^{-1})'} w(t,\bx)\dx,~k = 0, \dots, N \\
\frac {|k| r_0^{|k|}}{2\pi}
\int_{B_{r_0}} \overline{(\bv^\CM_\infty - \bv^\CM)z^{-|k|-1}} (\Phi^{-1})' w(t,\bx)\dx,~k = -N, \dots, -1.
\end{matrix} } \right .  
\end{align*}
where $w(t,\bx)$ is the solution of boundary-value problem (\ref{helmholtzeq2}), (\ref{_3:initw})-(\ref{_3:bound3}) with boundary condition $\vec u(t)$. $F$ is well-defined since $k/z^{|k|+1} \in L_2(B_{r_0})$ and in virtue of Lemma \ref{lembsest} from 
\begin{align*}
\left \| k (\bv^\CM_\infty - \bv^\CM(t,\cdot) ){z^{-|k|-1}}  \right \|_{L_2(B_{r_0})}  \leq C \|w(t,\bx)\|_{W}
\end{align*}
follows
\begin{align*}
\left |F[\vec u(t)] \right | \leq C \|w(t,\cdot)\|^2_{W}
\end{align*}
with some new constant $C>0$. From $w(t,\bx) \in C\left([0,T];H^1(B_{r_0}) \right)$ follows $F[\vec u(t)] \in \left (C[0,T]\right )^{2N+1}$.

Fix $M>0$ and
take $\vec u_1(\cdot)$, $\vec u_2(\cdot)$ from $\left (C[0,T]\right )^{2N+1}$ with $\|\vec u_i\|_{L_\infty}<M$, $i=1,2$. Let $w_1$, $w_2$ - be the corresponding solutions of (\ref{helmholtzeq2}), (\ref{_3:initw})-(\ref{_3:bound3}) with $w_0(\bx)$ satisfying $\|w_0(\bx)\|\leq M$. 

$F$ is the contraction mapping with respect to $\vec u$.  Indeed
\begin{align*}
&\left | F[\vec u_1(\cdot)] - F[\vec u_2(\cdot)] \right | \leq \\&~~~~~~~~
\frac {|k| r_0^{|k|}}{2\pi} 
\int_{B_{r_0}} \Big |  (\bv^\CM_\infty - \bv_1^\CM){z^{-|k|-1}} \overline{(\Phi^{-1})'} w_1(t,\bx)  - \\&~~~~~~~~
(\bv^\CM_\infty - \bv_2^\CM){z^{-|k|-1}} \overline{(\Phi^{-1})'} w_2(t,\bx)\Big | \dx \leq \\&~~~~~~~~ \frac {r_0^{|k|}}{2\pi} 
\int_{B_{r_0}} \left |  k(\bv^\CM_\infty - \bv_1^\CM){z^{-|k|-1}}  (w_1(t,\bx) - w_2(t,\bx)) \right | \dx  +  \\&~~~~~~~~ \frac {r_0^{|k|}}{2\pi} 
\int_{B_{r_0}} \Big |  k(\bv_2^\CM - \bv_1^\CM){z^{-|k|-1}}  w_2(t,\bx) \Big | \dx .
\end{align*}

From Lemma \ref{lembsest} again we have with some $C>0$
\begin{align*}
\left \| k \left (\bv_1^\CM(t,\cdot) - \bv_2^\CM(t,\cdot)\right ){z^{-|k|-1}}  \right \|_{L_2(B_{r_0})}  \leq C \|w_1(t,\bx) - w_2(t,\bx)\|_{W}
\end{align*}
and
$$
\| k(\bv^\CM_\infty - \bv_1^\CM(t,\cdot)){z^{-|k|-1}}\|_{L_2(B_{r_0})}  
\leq C\| w_1(t,\bx) \|_{W}.
$$
Note, that from Lipschitz continuity of $w$ due to $\vec u$ the norms $\| w_1(t,\bx) \|_{W}$, $\| w_2(t,\bx) \|_{W}$ can be arbitrary small for small $M>0$. Then the estimate
\begin{align*}
\left | F[\vec u_1(\cdot)] - F[\vec u_2(\cdot)] \right | \leq
 C(M) \|w_1(t,\cdot) - w_2(t,\cdot)\|_{W} 
\end{align*}
holds with $C(M) \to 0$ as $M \to 0$.

Since with some $L>0$
$$
\|w_1 - w_2\|_{C \left ( [0,T], W) \right )} \leq L \| u_1 - u_2 \|_{\left (C[0,T]\right )^{2N+1}}
$$
then for small $M>0$ we will finally have
$$
\left \| F[\vec u_1(\cdot)] - F[\vec u_2(\cdot)] \right \|_{\left (C[0,T]\right )^{2N+1}} \leq K \|\vec u_1(\cdot) - \vec u_2(\cdot)\|_{\left (C[0,T]\right )^{2N+1}},
$$
with $K<1$ and from the fixed-point theorem there exists the solution of (\ref{boundary_equation}). The theorem is proved.
\end{proof}

\begin{remark} If  $w_0(\bx) \in H^2(B_{r_0})$ then the solution will satisfy $w(t,\cdot)\in H^2(B_{r_0})$ and the velocity on the boundary will tend to zero in $L_2(\partial \Omega)$ as $N\to\infty$ according to (\ref{strongconv}).
If we further increase the smoothness of the initial datum then we can obtain the uniform convergence of velocity to no-slip condition on the boundary.
\end{remark}

\section{Appendix}
Here we study semigroup for Stokes flow and prove uniqueness solvability for Oseen and Helmholtz equations using fixed-point argument.

\subsection{Stokes semi-group estimates}
The boundary-value problem (\ref{stokeseq}), (\ref{robin_bound}), (\ref{boundinfvorticity})  can be solved using special  Weber-Orr transform (see \cite{AG} for more details):
\begin{equation}\label{int:weberorr}
W_{k,l}[f](\lambda) = \int_{r_0}^\infty \frac{J_{k}(\lambda s)Y_{l}(\lambda r_0) - Y_{k}(\lambda s)J_{l}(\lambda r_0)}{\sqrt{J_{l}^2(\lambda r_0) + Y_{l}^2(\lambda r_0)}} f(s) s \ds,~k\in \ZM,
\end{equation}
where
$J_{k}(r)$, $Y_{k}(r)$ - are the Bessel functions of the first and second type (see \cite{BE}).

\vskip 5pt
The inverse transform is defined by the formula
\begin{equation}\label{int:weberorrinv}
W^{-1}_{k,l}[\hat f](r) = \int_{0}^\infty \frac{J_{k}(\lambda r)Y_{l}(\lambda r_0) - Y_{k}(\lambda r)J_{l}(\lambda r_0)}{\sqrt{J_{l}^2(\lambda r_0) + Y_{l}^2(\lambda r_0)}} \hat f (\lambda) \lambda \dlambda.
\end{equation}

The case of $W_{|k|,|k|-1}$  is of special interest and this transform can be considered as hydrodynamical. In \cite{AG} was proved the following
\begin{thm} Let vector field $\bv_0(\bx)$ satisfies (\ref{freediv}), (\ref{bound}), (\ref{boundinf}), (\ref{zerocirculation}), the vorticity $\curl \bv_0(\bx)$ $ \in L_1(B_{r_0})$, and its Fourier series as well as Fourier series for its vorticity $w_0(\bx)$ with coefficients $w_k^0(r)$ converges, $\bv (t, \bx)$ be the solution of (\ref{stokeseqvelocity}), (\ref{freedivns})-(\ref{boundinfns}). Then $w(t,\bx)=\curl \bv (t, \bx)$ satisfies equation (\ref{stokeseq}), boundary conditions (\ref{robin_bound}),  (\ref{boundinfvorticity}) and is given via Fourier coefficients:
\begin{align}\label{MyGreatFormula}
w_k(t,r) = W^{-1}_{|k|,|k|-1} \left [ e^{-\lambda^2 t} W_{|k|,|k|-1} [w^0_k(\cdot)](\lambda) \right ](t,r).
\end{align}

\end{thm}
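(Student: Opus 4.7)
The plan is to reduce this to a direct spectral calculation with the Weber-Orr transform applied fibrewise in the angular variable. First, I would observe that taking $\curl$ of the Stokes momentum equation (\ref{stokeseqvelocity}) automatically yields the heat equation (\ref{stokeseq}) for $w=\curl \bv$, and that the decay (\ref{boundinfvorticity}) follows from the decay of $\bv-\bv_\infty$. The Robin condition (\ref{robin_bound}) is then forced: the no-slip hypothesis on $\bv_0$ implies the moment relations (\ref{noslipcondintegral}) for $w_0$, these moments are conserved under the Stokes flow (since $v_\infty$ is time-independent and the circulation is preserved), and the computation carried out in (\ref{integrationbypart_dissip}) in Subsection 2.1 shows that preservation of the $k$-th moment is exactly equivalent to $r_0\partial_r w_k(t,r_0)+|k|w_k(t,r_0)=0$.

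Next, working Fourier harmonic by harmonic, the equation for $w_k(t,r)$ becomes $\partial_t w_k=\Delta_k w_k$ with $\Delta_k$ as in (\ref{fourierlaplace}). I would apply $W_{|k|,|k|-1}$ in the radial variable to both sides. The content is the identity
\begin{equation*}
W_{|k|,|k|-1}[\Delta_k f](\lambda) = -\lambda^2\, W_{|k|,|k|-1}[f](\lambda),
\end{equation*}
valid whenever $f$ satisfies (\ref{robin_bound}) and decays at infinity. This is proved by two integrations by parts in $s$: the kernel $K(\lambda,s)$ of $W_{|k|,|k|-1}$ is a linear combination of $J_{|k|}(\lambda s)$ and $Y_{|k|}(\lambda s)$, hence is itself an eigenfunction of $\Delta_{|k|}$ with eigenvalue $-\lambda^2$, so after the two integrations by parts one is left only with boundary terms at $s=r_0$ (the terms at $s=\infty$ vanish by decay). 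The boundary terms have the form $-s\bigl(K(\lambda,s)\partial_s f(s)-f(s)\partial_s K(\lambda,s)\bigr)\big|_{s=r_0}$; a short calculation using $J'_{|k|}(x)=J_{|k|-1}(x)-\tfrac{|k|}{x}J_{|k|}(x)$ (and the same for $Y$) shows that $r_0\partial_s K(\lambda,r_0)+|k|K(\lambda,r_0)=0$, i.e.\ the kernel itself satisfies the same Robin condition at the index choice $l=|k|-1$. Hence the bracket $K\partial_s f-f\partial_s K$ vanishes at $s=r_0$ by pairing opposite Robin conditions, and the identity follows.

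With this in hand, setting $\hat w_k(t,\lambda)=W_{|k|,|k|-1}[w_k(t,\cdot)](\lambda)$ reduces the PDE to the ODE $\partial_t\hat w_k=-\lambda^2\hat w_k$ in $\lambda$, whose solution is $\hat w_k(t,\lambda)=e^{-\lambda^2 t}\hat w_k(0,\lambda)$. Applying the Weber-Orr inversion formula (\ref{int:weberorrinv}) then produces the claimed representation (\ref{MyGreatFormula}). The bulk of the work and the main technical obstacle is the justification of the spectral identity above: one must show that both $w_k(t,\cdot)$ and its derivatives lie in the domain of $W_{|k|,|k|-1}$ so the integrations by parts are legitimate, verify the decay at $s\to\infty$ under the hypotheses on $w_0$ and the heat semigroup, and invoke the Hankel/Weber–Orr inversion theorem to recover $w_k(t,r)$ from $\hat w_k(t,\lambda)$. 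The hypotheses $\curl\bv_0\in L_1(B_{r_0})$ and convergence of the Fourier series for $w_0$ supply exactly what is needed to control these limits mode by mode, so the summation over $k$ reassembles $w(t,\bx)$. The technical lemmas underlying the inversion and the domain of $W_{|k|,|k|-1}$ are already established in \cite{AG}, so I would simply cite them rather than reprove them.
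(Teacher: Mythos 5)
Your sketch is sound, but note that the paper itself offers no proof of this theorem: it is quoted verbatim from the author's earlier work \cite{AG} with a bare citation. What you propose is exactly the mechanism that \emph{is} carried out explicitly later in this paper for the Oseen equation (Appendix, Subsection on Oseen resolvability): there the author integrates $W_{k,k-1}[\Delta_k w_k]$ by parts twice and uses the identity $k R_{k,k-1}(\lambda,r_0)+\lambda r_0 R'_{k,k-1}(\lambda,r_0)=r_0 R_{k-1,k-1}(\lambda,r_0)=0$, which is precisely your observation that the kernel of $W_{|k|,|k|-1}$ satisfies the same Robin condition (\ref{robin_bound}) at $s=r_0$, so the boundary bracket vanishes and $W_{|k|,|k|-1}[\Delta_k f]=-\lambda^2 W_{|k|,|k|-1}[f]$. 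Your derivation of (\ref{robin_bound}) from conservation of the moments via (\ref{integrationbypart_dissip}) also matches Subsection 2.1. The one point you should make explicit rather than bury in a citation: the generalised Weber--Orr transform $W_{|k|,|k|-1}$ has a non-trivial kernel, so (\ref{int:weberorrinv}) is only a left inverse on the orthogonal complement of that kernel; the representation (\ref{MyGreatFormula}) is therefore legitimate only because the no-slip moment relations (\ref{noslipcondintegral}) force $w_k(t,\cdot)$ to be orthogonal to the kernel, and because (as you argue) these moments are conserved in time so the orthogonality persists for all $t>0$. The paper flags this immediately after the theorem statement, and without it your final inversion step would fail for general data. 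With that caveat spelled out, your argument is essentially the proof from \cite{AG}.
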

This formula gives an explicit form of the solution for Stokes system in the exterior of the disc in terms of vorticity. From Biot-Savar law one can get the velocity field. 

Generalised Weber-Orr transform has a non-trivial kernel. So, the transform (\ref{int:weberorrinv}) is the inverse one to (\ref{int:weberorr}) only for functions, which are orthogonal to the kernel. But from no-slip condition (\ref{noslipcondintegral}) follows orthogonality of Fourier coefficients $w_k(t,\cdot)$ to the kernel of $W_{|k|,|k|-1}$. The invertibility   of the generalised Weber-Orr transform was studied in detail in \cite{AG}. 

So, Weber-Orr transforms satisfy Bessel-type inequality instead of the Plancherel equity:
\begin{align}\label{besselineq}
\| W_{k,l}[f] \|_{L_2(0,\infty; \lambda) }^2 \leq \|f\|_{L_2(r_0,\infty; r)}^2. 
\end{align}

From differentiation rules for Bessel functions follows
\begin{align}
\frac{\partial}{\partial r} W^{-1}_{k,k-1} [f]  = \frac 1 2  \left ( W^{-1}_{k-1,k-1}[\lambda f] - W^{-1}_{k+1,k-1}[\lambda f] \right ), \label{diffweber1}\\
\frac kr W^{-1}_{k,k-1}[ f] =  \frac 12 \left ( W^{-1}_{k+1,k-1}[\lambda  f] + W^{-1}_{k-1,k-1}[\lambda  f] \right ).\label{diffweber2}
\end{align}

In polar coordinates with unit vectors $\mathbf{e_r}$, $\mathbf{e_\varphi}$ the gradient is defined as $\nabla = \frac{\partial}{\partial r} \mathbf{e_r} + \frac 1r \frac{\partial}{\partial \varphi} \mathbf{e_\varphi}$ with polar coordinates $\nabla_r$, $\nabla_\varphi$. Multiplier $\frac kr$ corresponds to differentiation with respect to $\varphi$ of function $f(r)e^{ik\varphi}$.

Formula (\ref{diffweber2}) involves Weber transform $W^{-1}_{k+1,k-1}$ which also possesses non-trivial kernel and satisfy Bessel inequality
$$
\| W_{k+1,k-1}[f] \|_{L_2(0,\infty; \lambda) }^2 \leq \|f\|_{L_2(r_0,\infty; r)}^2. 
$$

And vice versa, multiplication by $\lambda$ transfers to differentiation but in more general sense including not only $\frac{\partial}{\partial r}$ but also angle derivative $\frac{\partial}{\partial \varphi}$ expressed by multiplier $\frac kr$. Indeed, using
\begin{align*}
\lambda J_k(\lambda r) = \frac{k-1}r J_{k-1}(\lambda r) - \lambda J'_{k-1}(\lambda r), \\
\lambda Y_k(\lambda r) = \frac{k-1}r Y_{k-1}(\lambda r) - \lambda Y'_{k-1}(\lambda r)
\end{align*}
we will have
\begin{equation} \label{diffweber3}
\lambda W_{k,k-1} [f] = W_{k-1,k-1} [\frac {k f}r] + W_{k-1,k-1} [f'(\cdot )].
\end{equation}

Formula (\ref{MyGreatFormula}) in theorem defines the Stokes semigroup $S(t)$ which corresponds to problem  (\ref{stokeseq}),  (\ref{robin_bound}),  (\ref{boundinfvorticity}). The estimates of this semi-group are given by the following
\begin{prop} \label{stokes_semigroup_thm}For $t > 0$ Stokes semigroup $S(t)$ for vorticity $\curl$ satisfies
\begin{align*}
\|S(t)w_0\|_{L_2(B_{r_0})} \leq \|w_0\|_{L_2(B_{r_0})}, \\ 
\|\nabla S(t)w_0\|_{L_2(B_{r_0})} \leq \frac 1{\sqrt {et} }\|w_0\|_{L_2(B_{r_0})},\\
\|S(t)w_0\|_{H^1(B_{r_0})} \leq \sqrt 3 \|w_0\|_{H^1(B_{r_0})}.
\end{align*}
\end{prop}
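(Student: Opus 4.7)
The plan is to work mode-by-mode in the angular Fourier decomposition, so the explicit semigroup formula (\ref{MyGreatFormula}) turns $S(t)$ into multiplication by $e^{-\lambda^2 t}$ in the transformed variable, and the Bessel-type bound (\ref{besselineq}) together with the differentiation rules (\ref{diffweber1})--(\ref{diffweber3}) do the rest. First I would use angular orthogonality to write
\[
\|w\|_{L_2(B_{r_0})}^2 = 2\pi\sum_{k\in\ZM}\|w_k\|^2_{L_2(r_0,\infty;r)}, \quad
\|\nabla w\|_{L_2(B_{r_0})}^2 = 2\pi\sum_{k\in\ZM}\bigl(\|\partial_r w_k\|^2_{L_2(r)}+\|k w_k/r\|^2_{L_2(r)}\bigr),
\]
so it suffices to prove the three bounds one Fourier mode at a time.

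The first estimate is immediate: by (\ref{MyGreatFormula}), the trivial bound $|e^{-\lambda^2 t}|\le 1$, and two applications of Bessel (\ref{besselineq}) to the inverse and forward transforms, $\|w_k(t,\cdot)\|_{L_2(r)} \le \|w_k^0\|_{L_2(r)}$, and summation gives the claim. For the decay estimate I would apply (\ref{diffweber1}) and (\ref{diffweber2}) to $f(\lambda):=e^{-\lambda^2 t}W_{|k|,|k|-1}[w_k^0](\lambda)$; setting $a:=W^{-1}_{|k|-1,|k|-1}[\lambda f]$ and $b:=W^{-1}_{|k|+1,|k|-1}[\lambda f]$, the parallelogram identity
\[
|\partial_r w_k|^2 + |k w_k/r|^2 = \tfrac{1}{2}\bigl(|a|^2+|b|^2\bigr)
\]
followed by Bessel on each of $a$, $b$ yields $\|\partial_r w_k\|^2 + \|k w_k/r\|^2 \le \|\lambda f\|^2_{L_2(\lambda)}$. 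Pulling out the factor $\sup_{\lambda\ge 0}\lambda^2 e^{-2\lambda^2 t}=1/(2et)$ and applying Bessel once more to $W_{|k|,|k|-1}[w_k^0]$ produces the announced gradient decay after summation in $k$.

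The $H^1$ estimate will be the most delicate step, since the $L_2\to H^1$ smoothing of the previous step blows up as $t\to 0$ and cannot be used. Instead I would keep $\|\partial_r w_k\|^2 + \|k w_k/r\|^2 \le \|\lambda f\|^2_{L_2(\lambda)} \le \|\lambda W_{|k|,|k|-1}[w_k^0]\|^2_{L_2(\lambda)}$ and invoke (\ref{diffweber3}) to trade the multiplier $\lambda$ for the radial derivative and the angular derivative of the initial datum:
\[
\lambda W_{|k|,|k|-1}[w_k^0] = W_{|k|-1,|k|-1}\bigl[|k| w_k^0/r\bigr] + W_{|k|-1,|k|-1}\bigl[\partial_r w_k^0\bigr].
\]
A triangle inequality and Bessel on each summand then bound this by a fixed multiple of $\|\partial_r w_k^0\|^2+\|k w_k^0/r\|^2$; combining with the $L_2$ contraction from the first step and summing over $k$ produces the desired $H^1$ bound.

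The main obstacle is the careful bookkeeping of constants: the Weber--Orr transforms in play all have non-trivial kernels, so every step only yields a Bessel-type inequality rather than a Plancherel identity, and each triangle inequality inflates the constant. Reaching precisely $\sqrt{3}$ in the last bound depends on the grouping of terms in the $(A+B)^2 \le 2(A^2+B^2)$ step for $\lambda W_{|k|,|k|-1}[w_k^0]$ and on combining with the $L_2$ bound before, rather than after, summing in $k$.
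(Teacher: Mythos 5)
Your proposal is correct and follows essentially the same route as the paper: mode-by-mode reduction via the explicit formula (\ref{MyGreatFormula}), the Bessel inequality (\ref{besselineq}), the differentiation rules (\ref{diffweber1})--(\ref{diffweber3}), and the elementary bound $\sup_{\lambda\geq 0}\lambda^2 e^{-2\lambda^2 t}=1/(2et)$. The only (harmless) difference is that you combine the radial and angular derivative components through the parallelogram identity instead of estimating each separately by the triangle inequality as the paper does, which in fact yields the slightly sharper constants $1/\sqrt{2et}$ and $\sqrt{2}$ in the last two bounds, both of which imply the stated inequalities.
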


\begin{proof}
From Bessel inequality we have
\begin{align*}
\|w_k(t,r)\|^2_{L_2(r_0,\infty, r)} \leq \| e^{-\lambda^2 t} W_{|k|,|k|-1} [w^0_k(\cdot)](\lambda)\|^2_{L_2(0,\infty, \lambda)} \\
\leq \| W_{|k|,|k|-1} [w^0_k(\cdot)](\lambda)\|^2_{L_2(0,\infty, \lambda)} \leq 
 \| w^0_k(r)\|_{L_2(r_0,\infty, r )}.
\end{align*}
Summarizing by $k$ we obtain the first estimate.

Fix $k \geq 0$. Then from estimate
$$
 |\lambda e^{-\lambda^2 t}| \leq \frac 1{\sqrt {2et} } 
$$
and Bessel inequality (\ref{besselineq}) from (\ref{diffweber1}) we have 
\begin{align*}
\left \|\frac{\partial}{\partial r} w_k(t,r) \right \|_{L_2(r_0,\infty, r)} \leq  \| \lambda e^{-\lambda^2 t} W_{k,k-1} [w^0_k(\cdot)](\lambda)\|_{L_2(0,\infty, \lambda)} \\ \leq  
\frac {\| w^0_k(r)\|_{L_2(r_0,\infty, r )}}{\sqrt {2et}}
\end{align*}
and
$$
\|\nabla_r S(t)w_0\|_{L_2(B_{r_0})} \leq \frac 1{\sqrt {2et} }\|w_0\|_{L_2(B_{r_0})}.
$$

From (\ref{diffweber2}) in a similar way follows
\begin{align*}
\left \|\frac kr w_k(t,r) \right \|_{L_2(r_0,\infty, r)} \leq \frac {\| w^0_k(r)\|_{L_2(r_0,\infty, r )}}{\sqrt {2et}}.
\end{align*}

Then we have
$$
\|\nabla_\varphi S(t)w_0\|_{L_2(B_{r_0})} \leq \frac 1{\sqrt {2et} }\|w_0\|_{L_2(B_{r_0})}
$$
and the second estimate of the proposition is proved.

Now we will prove the last inequality. Using (\ref{diffweber1}),(\ref{diffweber3})
\begin{align*}
\left \|\frac{\partial}{\partial r} w_k(t,r) \right \|_{L_2(r_0,\infty, r)} \leq  \| \lambda e^{-\lambda^2 t} W_{k,k-1} [w^0_k(\cdot)](\lambda)\|_{L_2(0,\infty, \lambda)} \\ = \left \| e^{-\lambda^2 t}  W_{k-1,k-1} [\frac {k w^0_k}r +  \frac{\partial}{\partial r} w^0_k(\cdot )]  \right \| \leq \left \|\frac {k w^0_k}r \right \| + \left \| \frac{\partial}{\partial r} w^0_k(\cdot ) \right \|
\end{align*}
and so
\begin{align} \label{nablarsemigroup}
\|\nabla_r S(t)w_0\|_{L_2(B_{r_0})} \leq \sqrt 2 \|\nabla w_0\|_{L_2(B_{r_0})}.
\end{align}

In a similar way from (\ref{diffweber2}), (\ref{diffweber3})
\begin{align*}
\left \|\frac kr w_k(t,r) \right \|_{L_2(r_0,\infty, r)} \leq   \| \lambda e^{-\lambda^2 t}  W_{k,k-1} 
 [w^0_k(\cdot)](\lambda)\|_{L_2(0,\infty, \lambda)}
  \\  \leq
  \left (  \left \|\frac {k w^0_k}r \right \| +   \left \| \frac{\partial}{\partial r} w^0_k(\cdot ) \right \| \right )
\end{align*}
and finally
$$
\|\nabla_\varphi S(t)w_0\|_{L_2(B_{r_0})} \leq \sqrt 2 \|\nabla w_0\|_{L_2(B_{r_0})}
$$
combined with (\ref{nablarsemigroup}) and the first estimate of the proposition gives the last estimate.

\end{proof}
\subsection{Existence theorem for Oseen equation in exterior of the disc} \label{oseen_sect}
For a fixed irrotational velocity field $\tilde \bv(t,\bx) $ consider Oseen equation
\begin{equation} 
\frac{\partial w(t,\bx)}{\partial t}	 - \Delta w + (\tilde \bv,\nabla)w  = 0,  \label{oseeneqw} 
\end{equation} 
with initial datum (\ref{_3:initw}) 
and condition at infinity (\ref{_3:boundinfw}).

We supply the problem (\ref{oseeneqw}), (\ref{_3:initw}), (\ref{_3:boundinfw}) with boundary conditions (\ref{_3:bound1})-(\ref{_3:bound3}).

Define for $k, l\in \ZM$
$$
R_{k,l}(\lambda, r) = \frac{J_{k}(\lambda r)Y_{l}(\lambda r_0) - Y_{k}(\lambda r)J_{l}(\lambda r_0)}{\sqrt{J_{l}^2(\lambda r_0) + Y_{l}^2(\lambda r_0)}}.
$$

From properties of Bessel functions
\begin{align*}
J_{k - 1}(r)={\frac {k }{r}}J_{k }(r) + J'_{k }(r) \\
Y_{k - 1}(r)={\frac {k }{r}}Y_{k }(r) + Y'_{k }(r)
\end{align*}
follows
$$
k {R_{k,k-1}(\lambda, r_0)} + \lambda r_0   {R'_{k,k-1}(\lambda, r_0)} =  r_0   {R_{k-1,k-1}(\lambda, r_0)} = 0.
$$

Also holds (\cite{BE}):
$$
R_{k,k-1}(\lambda, r_0) = \frac{J_{k}(\lambda r_0)Y_{k-1}(\lambda r_0) - Y_{k}(\lambda r_0)J_{k-1}(\lambda r_0)}{\sqrt{J_{k-1}^2(\lambda r_0) + Y_{k-1}^2(\lambda r_0)}} = \frac 2{\pi r_0 \lambda }.
$$

We apply Weber-Orr transform $W_{k,k-1}$ (\ref{int:weberorr}) to Helmholtz equation for Oseen flow (\ref{oseeneqw}). First we find how it acts on Laplace operator. Then using integration by parts we will have 
\begin{align*}
&W_{k,k-1}[\Delta_k w_k(t, r)] = - r_0 \frac{\partial w_k(t, r)}{\partial r} \frac{R_{k,k-1}(\lambda, r)}{\sqrt{J_{k-1}^2(\lambda, r_0) + Y_{k-1}^2(\lambda, r_0)}} \Bigg |_{r_0}^\infty \\ & +  r_0 w_k(t, r) \frac{\lambda R'_{k,k-1}(\lambda, r)}{\sqrt{J_{k-1}^2(\lambda r_0) + Y_{k-1}^2(\lambda r_0)}} \Bigg |_{r_0}^\infty - \lambda^2 W_{k,k-1}[w_k(t, r)] \\ &
=\left (u_k(t) - k w_k(t,r_0)  \right) \frac{  R_{k,k-1}(\lambda, r_0)}{\sqrt{J_{k-1}^2(\lambda r_0) + Y_{k-1}^2(\lambda r_0)}}  \\ & - r_0 w_k(t, r_0) \frac{\lambda R'_{k,k-1}(\lambda, r)}{\sqrt{J_{k-1}^2(\lambda r_0) + Y_{k-1}^2(\lambda r_0)}} - \lambda^2 \hat w_k(t, \lambda)\\ &
= - \frac{w_k(t,r_0)}{\sqrt{J_{k-1}^2(\lambda r_0) + Y_{k-1}^2(\lambda r_0)}} \left (k {R_{k,k-1}(\lambda, r_0)} + \lambda r_0   {R'_{k,k-1}(\lambda, r_0)}  \right ) \\ &- \lambda^2 \hat w_k(t, \lambda) + u_k(t)\frac{  R_{k,k-1}(\lambda, r_0)}{\sqrt{J_{k-1}^2(\lambda r_0) + Y_{k-1}^2(\lambda r_0)}} \\ & =
   \frac{2 u_k(t)}{\pi r_0 \lambda \sqrt{J_{k-1}^2(\lambda r_0) + Y_{k-1}^2(\lambda r_0)}}  - \lambda^2 \hat w_k(t, \lambda).
\end{align*}

Then the Helmholtz equation (\ref{oseeneqw}) in terms of Fourier coefficients $w_k$ after Weber-Orr transform can be written as
$$
\partial \hat w_k(t, \lambda) + \lambda^2 \hat w_k(t, \lambda) + r_k(\lambda) u_k(t)  + W_{k,k-1}[(\bv,\nabla)w]_k = 0
$$
where
$$
r_k(\lambda) = \frac{2}{\pi r_0 \lambda \sqrt{J_{k-1}^2(\lambda r_0) + Y_{k-1}^2(\lambda r_0)}}.
$$

Finally with the help of integral transform we reduced the Helmholtz equation to the following integral relation
\begin{align*}
\hat w_k(t, \lambda) = 
e^{-\lambda^2 t} \hat w_{0,k}(\lambda) + r_k(\lambda) \int_0^t e^{-\lambda^2(t-\tau)}u_k(\tau)d\tau \nonumber \\ +  \int_0^t e^{-\lambda^2(t-\tau)}W_{k,k-1}[(\tilde \bv,\nabla)w]_k(\tau, \lambda)d\tau,
\end{align*}
where $\hat w_k(t, \cdot) = W_{k,k-1}[w_k(t, \cdot)]$, $[(\tilde \bv,\nabla)w]_k$ - $k$-th Fourier coefficient of the term $(\tilde \bv,\nabla)w$. Take the inverse transform $W_{k,k-1}$ and rewrite it in terms of Stokes semigroup $S(t)$ corresponding to the problem  (\ref{stokeseq}),  (\ref{robin_bound}),  (\ref{boundinfvorticity}):
\begin{align}\label{_3:int_rel}
w(t,\bx) = 
S(t)w_0(\bx) + \sum_{k=-N}^N e^{ik\varphi} W_{k,k-1} \left [ r_k(\lambda) \int_0^t e^{-\lambda^2(t-\tau)}u_k(\tau)d\tau \right ] \nonumber \\ +  \int_0^t \left (\nabla S(t - \tau), \tilde \bv \right) w(\tau, \bx) d\tau.
\end{align}


\begin{thm}[resolvability of Oseen equation]
For given $\tilde \bv(t,\bx) \in C(R_+\times B_{r_0})\cap L_\infty^{loc}\left(R_+; L_\infty(B_{r_0})\right)$ satisfying no-slip condition (\ref{boundns}), $u_k(t) \in L_\infty^{loc}(\RM_+)$, $k=0,\dots, N$,
$w_0(\bx) \in L_2(B_{r_0})$,  the problem 
(\ref{oseeneqw}), (\ref{_3:initw})-(\ref{_3:bound3}) has a unique global solution $w(t,\bx) \in C \left ( [0,\infty), L_2(B_{r_0}) \right )$ which is locally Lipschitz mapping due to $\{u_k\}_{k=1}^N$, $w_0(\bx)$.
\end{thm}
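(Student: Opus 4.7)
The plan is to recast the problem as the integral equation \eqref{_3:int_rel} and apply the Banach contraction principle in the space $X_T:=C\bigl([0,T];L_2(B_{r_0})\bigr)$ for a small $T>0$, then iterate to obtain a global solution, and finally read off the Lipschitz dependence on the data from the same estimates. Define the affine map $\mathcal{T}:X_T\to X_T$ by sending $w$ to the right-hand side of \eqref{_3:int_rel}. I would treat the three terms on the right separately. The free term $S(t)w_0$ is controlled in $L_2(B_{r_0})$ by $\|w_0\|_{L_2}$ via Proposition \ref{stokes_semigroup_thm}, and its continuity in $t$ follows from the strong continuity of $S(t)$ on $L_2$. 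The boundary-forcing term involves only the finitely many harmonics $k=-N,\dots,N$, so it suffices to bound each summand, which I would do using the Bessel-type inequality \eqref{besselineq} together with the explicit kernel $r_k(\lambda)$ introduced just before the theorem. The transport term is the only one that couples $\mathcal{T}$ to $w$ itself.

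For the transport term I would rely on the gradient semigroup estimate $\|\nabla S(t-\tau)f\|_{L_2}\leq \|f\|_{L_2}/\sqrt{e(t-\tau)}$ from Proposition \ref{stokes_semigroup_thm}. Because $\tilde\bv$ is divergence-free and satisfies the no-slip condition, one can rewrite $(\tilde\bv,\nabla)w=\divv(\tilde\bv w)$ and move the derivative onto the semigroup kernel with no boundary contribution, which is precisely the manipulation already reflected in the integrand of \eqref{_3:int_rel}. This yields
\begin{align*}
\left\|\int_0^t\bigl(\nabla S(t-\tau),\tilde\bv\bigr)w(\tau,\cdot)\,d\tau\right\|_{L_2(B_{r_0})}
&\leq \int_0^t \frac{\|\tilde\bv(\tau,\cdot)\|_{L_\infty}\,\|w(\tau,\cdot)\|_{L_2}}{\sqrt{e(t-\tau)}}\,d\tau\\
&\leq \frac{2\sqrt{T}}{\sqrt e}\,\|\tilde\bv\|_{L_\infty(0,T;L_\infty(B_{r_0}))}\,\|w\|_{X_T},
\end{align*}
so that for $T$ small enough, depending only on $\|\tilde\bv\|_{L_\infty(0,T;L_\infty)}$, the linear part of $\mathcal{T}$ becomes a strict contraction on $X_T$. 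The Banach fixed-point theorem then produces a unique mild solution on $[0,T]$.

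Global existence follows from linearity in $w$: the contraction radius $T$ depends only on $\|\tilde\bv\|_{L_\infty}$ and not on the size of $w_0$ or the boundary controls, so I would iterate on $[T,2T],[2T,3T],\dots$ to extend the solution to $C\bigl([0,\infty);L_2(B_{r_0})\bigr)$; uniqueness at each step glues into a unique global solution. For the Lipschitz statement, the difference of two solutions driven by $(w_0^{(i)},\{u_k^{(i)}\})$, $i=1,2$, satisfies the same linear Oseen equation with data $(w_0^{(1)}-w_0^{(2)},\{u_k^{(1)}-u_k^{(2)}\})$, and applying the contraction estimate to this difference directly yields
\[
\|w^{(1)}-w^{(2)}\|_{X_T}\leq C\bigl(T,\|\tilde\bv\|_{L_\infty(0,T;L_\infty)}\bigr)\left(\|w_0^{(1)}-w_0^{(2)}\|_{L_2}+\sum_{k=0}^N\|u_k^{(1)}-u_k^{(2)}\|_{L_\infty(0,T)}\right).
\]

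The main obstacle I expect will be the $L_2$-control of the boundary-forcing term. The factor $r_k(\lambda)=2/\bigl(\pi r_0\lambda\sqrt{J_{k-1}^2(\lambda r_0)+Y_{k-1}^2(\lambda r_0)}\bigr)$ has a $1/\lambda$ singularity at $\lambda=0$, and one must verify that after multiplication by $e^{-\lambda^2(t-\tau)}$, convolution against $u_k(\tau)$, and inversion of the Weber--Orr transform $W_{k,k-1}$, the outcome lies in $L_2(B_{r_0})$ uniformly in $t\in[0,T]$ with a bound linear in $\|u_k\|_{L_\infty(0,T)}$. This requires combining the low-frequency asymptotics of $J_{k-1}^2+Y_{k-1}^2$ (algebraic decay for $|k|\geq 2$, logarithmic for $|k|=1$, regular for $k=0$) with the regularising exponential $e^{-\lambda^2 t}$, so that the product $r_k(\lambda)e^{-\lambda^2 t}$ becomes square-integrable against $\lambda\,d\lambda$ and the Bessel inequality \eqref{besselineq} can be applied. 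Outside this technical point the rest of the argument is a routine semigroup fixed-point construction.
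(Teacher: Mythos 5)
Your proposal is correct and its core coincides with the paper's proof: both recast the problem as the mild formulation (\ref{_3:int_rel}) and run a Banach fixed-point argument in $C([0,T];L_2(B_{r_0}))$, bounding the transport term by the gradient semigroup estimate $\|\nabla S(t-\tau)f\|_{L_2}\leq \|f\|_{L_2}/\sqrt{e(t-\tau)}$ so that the contraction constant is of order $\sqrt{T}\,\|\tilde \bv\|_{L_\infty}$. The one genuine divergence is the globalization step: the paper keeps the local solution and rules out finite-time blow-up via a singular Gronwall inequality applied to $\|w(t,\cdot)\|_{L_2}$ (and reuses the same Gronwall bound with $M=0$ for uniqueness), whereas you exploit linearity to observe that the contraction time depends only on $\|\tilde\bv\|_{L_\infty}$ on the interval in question and iterate on successive subintervals. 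Both are valid; your iteration is arguably the more natural route for a linear equation and gives the Lipschitz dependence on $(w_0,\{u_k\})$ for free, while the paper's Gronwall route yields an explicit exponential growth bound $\|w(t,\cdot)\|_{L_2}\leq M\exp\bigl(2\sqrt{t/e}\,\sup_{\tau\le t}\|\tilde\bv(\tau,\cdot)\|_{L_\infty}\bigr)$ as a by-product. Your flagged concern about the boundary-forcing term is well placed: the paper simply asserts that $r_k(\lambda)\sim 1/\sqrt{\lambda}$ (which is the large-$\lambda$ behaviour) and states the $L_2(0,\infty;\lambda)$ bound by $C\|u_k\|_{C[0,T]}$ without addressing $\lambda\to 0$; the low-frequency check you outline (logarithmic compensation for $|k|=1$, boundedness for $k=0$, algebraic decay for $|k|\geq 2$) does go through, so this is a gap in exposition rather than in substance, and your treatment of it is more careful than the paper's.
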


\begin{proof}First we need prove the local resolvability of the equation (\ref{_3:int_rel}). Fix $T>0$. For given $\tilde \bv \in L_\infty(B_{r_0})$, $u_k\in C[0,T]$, $k=0,\dots, N$. Consider the map 
\begin{align*}
F\left(w(\tau, \cdot)\right) = S(t)w_0(\bx) + \sum_{k=-N}^N e^{ik\varphi} W_{k,k-1} \left [ r_k(\lambda) \int_0^t e^{-\lambda^2(t-\tau)}u_k(\tau)d\tau \right ] \nonumber \\ -  \int_0^t \left (\nabla S(t - \tau), \tilde \bv \right) w(\tau, \bx) d\tau.
\end{align*}

Consider the space $Q=C \left ( [0,T], L_2(B_{r_0}) \right )$. Since asymptotical behaviour of $r_k(\lambda)$ is $1/\sqrt \lambda$ then with some $C>0$
$$
\| r_k(\lambda) \int_0^t e^{-\lambda^2(t-\tau)}u_k(\tau)d\tau \|_{L_2(0,\infty, \lambda d\lambda)} \leq
C\|u_k(\tau)\|_{C[0,T]}.
$$ 

Set
$$M=\| w_0(\cdot)\|_{L_2} + C\sum_{k=-N}^N \|u_k(\tau)\|_{C[0,T]}.$$
Using estimates on $S(t)$ from Proposition \ref{stokes_semigroup_thm} we have
\begin{align*}
&\left \|F\left(w(t, \cdot)\right) \right\|_{L_2} \leq \| w_0(\cdot)\|_{L_2} + C\sum_{k=-N}^N \|u_k(\tau)\|_{C[0,T]} \nonumber + \\ &  \int_0^t \left \| (\nabla S(t - \tau), \tilde \bv )  w(\tau, \bx) \right\|_{L_2} d\tau \leq \\ & M +\|\tilde \bv\|_{L_\infty(R_+\times B_{r_0})} \|w\|_{Q} \int_0^t  \frac{d\tau}{\sqrt{e(t-\tau)}} = \\ &
M + 2 \sqrt \frac T e \|\tilde \bv\|_{L_\infty([0,T]\times B_{r_0})} \|w\|_{Q}.
\end{align*}

So we deduced that $F:Q \to Q$ is well-defined and maps $Q$ into itself. Now we prove that $F$ is a strict contraction in $Q$.
\begin{align*}
F\left(w_1(\tau, \cdot)\right) -  F\left(w_2(\tau, \cdot)\right) =  \int_0^t \left (\nabla S(t - \tau), \tilde \bv \right) \left (w_1(\tau, \bx) - w_2(\tau, \bx) \right )d\tau
\end{align*}
and
\begin{align*}
&\|F\left(w_1(\tau, \cdot)\right) -  F\left(w_2(\tau, \cdot)\right)\|_Q \leq \\ & \vraisup_{t\in[0,T]} \int_0^t \left\| (\nabla S(t - \tau), \tilde \bv ) \left (w_1(\tau, \bx) - w_2(\tau, \bx) \right )\right \|_{L_2}d\tau \leq \\ &
2\sqrt \frac T e \|\tilde \bv\|_{L_\infty([0,T]\times B_{r_0})} \|w_1-w_2\|_Q.
\end{align*}

Then for small $T$ by the Banach fixed point theorem, the map $F$ has a unique fixed point $w(t,\bx)$.

Next, we prove, that $\|w(t,\cdot)\|_{L_2(B_{r_0})}$ cannot blow up in finite time and the solution is global. Since $F(w)=w$ then from estimates above
\begin{align*}
\|w(t,\cdot)\|_{L_2} \leq M + \|\tilde \bv\|_{L_\infty([0,t]\times B_{r_0})} \int_0^t  \left ( \frac{1}{\sqrt{2e(t-\tau)}} \right )\|w(\tau,\cdot)\|_{L_2}d\tau
\end{align*}
and from Gronwall’s Lemma
\begin{align*}
\|w(t,\cdot)\|_{L_2} \leq M  e^{2 \sqrt \frac t e \vraisup_{\tau\in [0,t] }\|\tilde \bv(\tau, \cdot)\|_{L_\infty(B_{r_0})}}
\end{align*}
and $\|w(t,\cdot)\|_{L_2}$ stays finite for all time $t>0$.

No, we prove uniqueness. If $w_1(t,\cdot)$, $w_2(t,\cdot)$ are two solutions of (\ref{oseeneqw}), (\ref{_3:initw})-(\ref{_3:bound3}), then its difference $w_1-w_2$ is the solution of the same problem with zero initial and boundary conditions. Then $M$ defined above equals zero and from the same Gronwall’s Lemma
$$
\|w_1(t,\cdot)-w_2(t,\cdot)\|_{L_2} \leq 0,
$$
and the theorem is completely proved.
\end{proof}
\subsection{Existence theorem for Helmholtz equation in exterior of the disc} \label{ns_sect}
We consider the map 
\begin{align} \label{mapF}
F\left(w(t, \cdot)\right) = S(t)w_0(\bx) + \sum_{k=-N}^N e^{ik\varphi} W_{k,k-1} \left [ r_k(\lambda) \int_0^t e^{-\lambda^2(t-\tau)}u_k(\tau)d\tau \right ] \nonumber \\ +  \int_0^t  S(t - \tau)   (\bv,  \nabla w) d\tau
\end{align}
associated with the boundary-value problem for the Helmholtz equation (\ref{helmholtzeq}). In virtue of Lemma \ref{lembsest} $F$ will be well-defined in $Q=C \left ( [0,T], W) \right )$, where $W =  \{w \in H^1(B_{r_0})$, $w_{-1}, w_0, w_1 \in L_1(r_0,\infty;r) \}$. Denote cylinder $Z_{r_0,T} = [0,T]\times B_{r_0}$.

\begin{thm}[resolvability of Helmholtz equation]
For any $L>0$ there exists $T=T(L)>0$ such that for all $w_0 \in H^1(B_{r_0})$, $u_k\in C[0,T]$, $k=0,\dots, N$, $\|w_0\|_{H^1} \leq L$, $\| u_k(\cdot) \|_{C[0,T]}<L$ the problem (\ref{helmholtzeq}), (\ref{_3:initw})-(\ref{_3:bound3}) has a unique solution $w(t,\bx) \in Q$.
\end{thm}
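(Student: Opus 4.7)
The plan is to apply the Banach fixed-point theorem to the Duhamel map $F$ defined in (\ref{mapF}), viewed as a self-map of a closed ball $B_R\subset Q=C([0,T],W)$ for a radius $R=R(L)$ and time $T=T(L)$ to be fixed later. The first two terms of $F(w)$ are independent of $w$ and can be bounded by $CL$ exactly as in the Oseen proof: Proposition \ref{stokes_semigroup_thm} yields $\|S(t)w_0\|_{H^1}\le\sqrt{3}\,L$, while the Weber--Orr boundary contribution is bounded by $C\sum_k\|u_k\|_{C[0,T]}\le C'L$. The $L_1(r_0,\infty;r)$ components of $W$ for $k=\pm 1,0$ require a separate but analogous bound obtained from the Bessel-type inequality (\ref{besselineq}) applied to $r_k(\lambda)$ and to $w^0_k$ directly.

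For the nonlinear Duhamel term I would exploit $\divv\bv=0$ to rewrite $(\bv,\nabla w)=\divv(w\bv)$ and then invoke the smoothing estimate $\|\nabla S(\sigma)f\|_{L_2}\le(e\sigma)^{-1/2}\|f\|_{L_2}$ from Proposition \ref{stokes_semigroup_thm} to obtain
$$\Bigl\|\int_0^t S(t-\tau)(\bv,\nabla w)\,d\tau\Bigr\|_{H^1}\le C\int_0^t\!\Bigl(1+\tfrac{1}{\sqrt{e(t-\tau)}}\Bigr)\|\bv(\tau,\cdot)\|_{L_\infty}\|w(\tau,\cdot)\|_{H^1}\,d\tau.$$
Lemma \ref{lembsest}, which is the very reason $W$ incorporates the $L_1$ constraints on $w_{-1},w_0,w_1$, provides $\|\bv(\tau,\cdot)\|_{L_\infty(B_{r_0})}\le C\|w(\tau,\cdot)\|_W$, so the Duhamel integral is controlled by $C(T+\sqrt T)\,\|w\|_Q^2$. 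Choosing $R$ proportional to $L$ and then $T$ small enough forces $F(B_R)\subset B_R$.

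The contraction step uses the bilinear splitting
$(\bv_1,\nabla w_1)-(\bv_2,\nabla w_2)=\divv((w_1-w_2)\bv_1)+\divv(w_2(\bv_1-\bv_2))$
together with linearity of Biot--Savart and Lemma \ref{lembsest} to obtain $\|\bv_1-\bv_2\|_{L_\infty}\le C\|w_1-w_2\|_W$. Repeating the previous bound yields
$$\|F(w_1)-F(w_2)\|_Q\le C(T+\sqrt T)\,(\|w_1\|_Q+\|w_2\|_Q)\,\|w_1-w_2\|_Q,$$
which is a strict contraction on $B_R$ once $T=T(L)$ is small enough. The Banach theorem then produces a unique fixed point $w\in Q$, and the same difference estimate combined with Gronwall upgrades this to uniqueness in $Q$ without restriction to the ball.

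The principal technical obstacle, as foreshadowed by the very definition of $W$, is to propagate the $L_1(r_0,\infty;r)$ components of the norm for the modes $k=\pm 1,0$ throughout the argument: Proposition \ref{stokes_semigroup_thm} is stated only in $L_2$ and $H^1$, so closing the fixed point in the full $W$-norm requires one to show in addition that $\int_0^t S(t-\tau)\divv(w\bv)\,d\tau$ has Fourier coefficients $k=\pm 1,0$ in $L_1(r_0,\infty;r)$, by invoking the Bessel inequality for the generalised Weber--Orr transform together with $w\bv\in L_2$ (from $\bv\in L_\infty$ and $w\in L_2$). This bookkeeping for the three exceptional modes is the genuine substance of the argument; the remaining $L_2$/$H^1$ estimates follow the familiar pattern of the Oseen theorem proved just above.
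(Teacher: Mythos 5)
Your overall architecture (Banach fixed point for the Duhamel map $F$ of (\ref{mapF}) in a ball of $Q=C([0,T],W)$, self-map bound of order $C(T+\sqrt T)\|w\|_Q^2$, bilinear splitting for the contraction, smallness of $T=T(L)$) matches the paper's. But the step you treat as a citation is in fact the analytic core of the proof, and the citation is wrong. You assert that Lemma \ref{lembsest} provides $\|\bv(\tau,\cdot)\|_{L_\infty(B_{r_0})}\le C\|w(\tau,\cdot)\|_W$. It does not: that lemma only controls $\vraisup_r\|\bv(r,\cdot)-\bv_\infty\|^2_{H^{1/2}(S_r)}$, and $H^{1/2}$ on a one-dimensional circle is the critical case, which does \emph{not} embed into $L_\infty$. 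The inequality you want is the paper's (\ref{vwest}), and establishing it consumes roughly half of the paper's proof: one combines the Ladyzhenskaya-type inequality $\|\bv-\bv_\infty\|_{L_\infty}\le C\|\bv-\bv_\infty\|^{1/2}_{L_4}\|\nabla\bv\|^{1/2}_{L_4}$, the Calder\'on--Zygmund bound (\ref{bsest2}) giving $\|\nabla\bv\|_{L_4}\le C\|w\|_{L_4}$, the interpolation $\|w\|_{L_4}\le C\|w\|^{1/2}_{L_2}\|\nabla w\|^{1/2}_{L_2}$, and only then Lemma \ref{bsest} to control the $L_4$ norm of $\bv-\bv_\infty$ via traces on circles. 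This is precisely where the $H^1$ part of the norm of $W$ is genuinely needed; without this chain your self-map and contraction estimates are unsupported.

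Two further points. First, your plan for the $L_1(r_0,\infty;r)$ components of the modes $k=-1,0,1$ --- ``the Bessel inequality for the generalised Weber--Orr transform together with $w\bv\in L_2$'' --- cannot work as stated: the Bessel inequality (\ref{besselineq}) is an $L_2$ statement and produces no $L_1$ information. The paper instead identifies $[S(t)]_0$ and $[S(t)]_{\pm1}$ with Neumann and Robin heat semigroups and invokes $L_p$--$L_q$ heat-kernel estimates, $\|[S(t)]_k f\|_{L_1(r_0,\infty,r)}\le C\sqrt t\,\|f\|_{L_2(r_0,\infty,r)}$, which is the tool that actually closes the $W$-norm. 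Second, your rewriting $(\bv,\nabla w)=\divv(w\bv)$ followed by the smoothing bound for $\nabla S(\sigma)$ silently commutes the divergence with the semigroup; with the Robin boundary condition (\ref{robin_bound}) this commutation is not free and would require a separate adjoint smoothing estimate $\|S(\sigma)\divv f\|_{L_2}\le C\sigma^{-1/2}\|f\|_{L_2}$ plus control of boundary terms. The paper avoids this entirely by keeping $(\bv,\nabla w)$ in non-divergence form and paying with $\|\nabla w\|_{L_2}$, which is affordable because the fixed point is run in $H^1$.
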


\begin{proof}
First we derive that $F\left(w(t, \cdot)\right) \in H^1(B_{r_0})$ for fixed $t\in [0,T]$. From the proposition \ref{stokes_semigroup_thm} $S(t)$ is continuous mapping in $H^1(B_{r_0})$. 

From the estimate
$$
\| \lambda r_k(\lambda) \int_0^t e^{-\lambda^2(t-\tau)}u_k(\tau)d\tau \|_{L_2(0,\infty, \lambda d\lambda)} \leq
C\|u_k(\tau)\|_{C[0,T]}
$$ 
the second term in (\ref{mapF}) belongs to $H^1(B_{r_0})$.

With help of the proposition \ref{stokes_semigroup_thm} we have
\begin{align*}
\left \|F\left(w(t, \cdot)\right) \right\|_{H^1(B_{r_0})} \leq \sqrt 3 \| w_0(\cdot)\|_{H^1(B_{r_0})} + C\sum_{k=-N}^N \|u_k(\tau)\|_{ C[0,T]} \nonumber + \\  \int_0^t \left \|  S(t - \tau)  \left ( \bv, \nabla w(\tau, \bx) \right ) \right\|_{H^1(B_{r_0})} d\tau \leq \\
M + \int_0^t \left (1+\frac1{\sqrt {e(t-\tau)}} \right )\left \|\left ( \bv, \nabla w(\tau, \bx) \right ) \right\|_{H^1(B_{r_0})} d\tau
\end{align*}
with
$$M=\sqrt 3 \| w_0(\cdot)\|_{H^1} + C\sum_{k=-N}^N \|u_k(\tau)\|_{ C[0,T]}.$$

We invoke the following Sobolev inequality:
$$
\|\bv(t,\cdot) - \bv_\infty\|_{L_\infty(B_{r_0})} \leq C \|\bv - \bv_\infty(t,\cdot)\|^\frac 12 _{L_4(B_{r_0})} \|\nabla \bv(t,\cdot)\|^\frac 12_{L_4(B_{r_0})}.
$$

From (\ref{bsest2}) follows
$$
\|\nabla \bv(t,\cdot)\|_{L_4(B_{r_0})} \leq C \|w(t,\cdot) \|_{L_4(B_{r_0})}.
$$

Then in view of
$$
\|w(t,\cdot) \|_{L_4(B_{r_0})} \leq C \|w(t,\cdot)\|^\frac 12_{L_2(B_{r_0})} \|\nabla w(t,\cdot)\|^\frac 12_{L_2(B_{r_0})}
$$
we have
$$
\|\nabla \bv(t,\cdot)\|_{L_4(B_{r_0})} \leq C \|w(t,\cdot)\|^\frac 12_{L_2(B_{r_0})} \|\nabla w(t,\cdot)\|^\frac 12_{L_2(B_{r_0})}.
$$

For any $r$ on circle $S_r = \{\bx \in \RM^2,~|\bx|=r\}$, $r \geq r_0$ holds:
$$
\vraisup_{r\in [r_0,\infty)} \|\bv(t,\cdot) - \bv_\infty\|_{L_4(S_r)} \leq C \|\bv(t,\cdot) - \bv_\infty\|^\frac12_{L_2(S_r)} \|\nabla \bv(t,\cdot)\|^\frac12_{L_2(S_r)}
$$

Then from Lemma \ref{bsest}
\begin{align*}
&\int_{B_{r_0}} |\bv(t,\cdot) - \bv_\infty|^4d\bx \leq \\ &C \left (\|w(t,\cdot)\|^2_{L_2(B_{r_0})} + \sum_{k=-1,0,1}\|w_k(t,\cdot)\|^2_{L_1(r_0,\infty)} \right) \|\nabla \bv(t,\cdot)\|^2_{L_2(B_{r_0})}.
\end{align*}

Finally in virtue of (\ref{bsest2}) 
\begin{align*}
&\|\bv(t,\cdot) - \bv_\infty\|_{L_\infty(B_{r_0})} \leq \\
&C \left (\|w(t,\cdot)\|^2_{L_2(B_{r_0})} + \sum_{k=-1,0,1}\|w_k(t,\cdot)\|^2_{L_1(r_0,\infty)} \right)^\frac 18 \times \\ &\|w(t,\cdot)\|^\frac 12  \|\nabla w(t,\cdot)\|^\frac 14 _{L_2(B_{r_0})}
\end{align*}
and
\begin{align}\label{vwest}
\|\bv\|_{L_\infty(Z_{r_0,T})} \leq C \|w\|_Q.
\end{align}

So, with new constant $C>0$
\begin{align*}
&\left \| \int_0^t S(t - \tau) ( \bv , \nabla w) d\tau \right \|_{H_1(B_{r_0})} \leq  \\
&C \left(T+2\sqrt \frac Te \right) \|\bv(t,\cdot)\|_{L_\infty(Z_{r_0,T})} \|w(\tau, \bx)\|_{C \left ( [0,T], H^1(B_{r_0}) \right )}
\end{align*}
and $F\left(w(t, \cdot)\right) \in H^1(B_{r_0})$.

Now we prove that the first Fourier coefficients for $k=-1, 0, 1$ of the function $F\left(w(\tau, \cdot)\right)$ belong to $L_1(r_0,\infty;r)$. 

From (\ref{robin_bound}) zero Fourier coefficient of the Stokes semi-group $[S(t)]_0$ generates radially symmetrical solution $w(t,\bx)$ of the heat equation with Newman boundary condition
$$
\frac{\partial w(t,\bx')}{\partial n} = 0,~|\bx'|=r_0.
$$
  
Semigroup coefficients $[S(t)]_{\pm 1}$ correspond to solutions of the heat equation with Robin boundary
$$
\frac{\partial w}{\partial n} + w(t,\bx')  = 0,~|\bx'|=r_0.
$$

From $L_p-L_q$ estimates for heat equation \cite{Dv} with some $C>0$
$$
\left \| [S(t)]_k f \right \|_{L_1(r_0,\infty, r)} \leq C\sqrt t \|f\|_{L_2(r_0,\infty, r)} 
$$
for $k=-1,0,1$.

So we deduced that $F:Q \to Q$ is well-defined and maps $Q$ into itself. Now we prove that $F$ is a strict contraction in $Q$ in some ball $B=\{ w \in Q~|~||w||_Q<L\}$. 

Take $w_1, w_2 \in B$ with corresponding velocity fields $\bv_1$, $\bv_2$:
\begin{align*}
&F\left(w_1(\tau, \cdot)\right) -  F\left(w_2(\tau, \cdot)\right) =\\  &\int_0^t  S(t - \tau)  \left ( \bv_1 , \nabla \left (w_1(\tau, \bx) - w_2(\tau, \bx) \right ) \right )d\tau +\\
&\int_0^t  S(t - \tau) \left ( \bv_1 - \bv_2 , \nabla w_2(\tau, \bx) \right ) d\tau 
\end{align*}

Then from (\ref{vwest}) with some constants $C_1$, $C_2$
\begin{align*}
&\|F\left(w_1(\tau, \cdot)\right) -  F\left(w_2(\tau, \cdot)\right)\|_{H^1} \leq \\ &\vraisup_{t\in[0,T]} \int_0^t \left\|  S(t - \tau)  \left ( \bv_1 , \nabla \left (w_1(\tau, \bx) - w_2(\tau, \bx) \right ) \right ) \right \|_{H^1}d\tau + \\& \vraisup_{t\in[0,T]} \int_0^t \left\|   S(t - \tau) \left ( \bv_1 - \bv_2 , \nabla w_2(\tau, \bx) \right ) \right \|_{H^1}d\tau \leq \\&
\left (T+2\sqrt \frac T e \right )\left (\| 
 \bv_1\|_{L_\infty(Z_{r_0,T})} \|w_1-w_2\|_Q +  \|\bv_1 - \bv_2\|_{L_\infty(Z_{r_0,T})} \|w_2\|_Q\right )\leq \\&
 C_1\left (T+2\sqrt \frac T e \right )\left (\| 
 \bv_1\|_{L_\infty(Z_{r_0,T})} +  \|w_2\|_Q\right )  \|w_1-w_2\|_Q
 \leq \\& C_2 L \left (T+2\sqrt \frac T e \right )  \|w_1-w_2\|_Q.
\end{align*}

Estimates of Fourier coefficients for $k=-1,0,1$ can be held in a similar way using $L_p-L_q$ estimates. So, for any $L>0$ we can find such a small $T>0$ that $F$ becomes the strict contraction map in $Q$. Then for small $T$ by the Banach fixed point theorem, the map $F$ has a unique fixed point $w(t,\bx)$. The theorem is completely proved.
\end{proof}

\end{document}